\numberwithin{equation}{theorem}
\newcommand{\F}{\mathbb{F}}
\renewcommand{\m}{\mathfrak{m}}
\renewcommand{\n}{\mathfrak{n}}
\DeclareMathOperator{\depth}{depth}
\theoremstyle{theorem}
\newtheorem*{mainthma}{Main Theorem A}
\newtheorem*{mainthmb}{Main Theorem B}
\newtheorem*{mainthmc}{Main Theorem C}
\renewcommand{\O}{\mathcal O}
\begin{document}
\title{The dualizing complex of $F$-injective and Du Bois singularities}
\author{Bhargav Bhatt}
\author{Linquan Ma}
\author{Karl Schwede}
\address{Department of Mathematics\\University of Michigan\\2074 East Hall, 530 Church Street\\Ann Arbor\\MI 48109}
\email{bhattb@umich.edu}
\address{Department of Mathematics\\ University of Utah\\ Salt Lake City\\ UT 84112}
\email{lquanma@math.utah.edu}
\address{Department of Mathematics\\ University of Utah\\ Salt Lake City\\ UT 84112}
\email{schwede@math.utah.edu}

\thanks{The first named author was supported by a NSF Grants DMS \#1501461 and DMS \#1522828 and by a Packard Fellowship.} 
\thanks{The second named author was supported by NSF CAREER Grant DMS \#1252860/1501102 and a Simons Travel Grant.  }
\thanks{The third named author was supported in part by the NSF FRG Grant DMS \#1265261/1501115, NSF CAREER Grant DMS \#1252860/1501102 and a Sloan
  Fellowship.}

  \subjclass[2010]{14F18, 13A35, 14B05}
\keywords{$F$-injective, Du~Bois, dualizing complex, local cohomology}
\maketitle

\begin{abstract}
Let $(R,\m, k)$ be an excellent local ring of equal characteristic. Let $j$ be a positive integer such that $H_\m^i(R)$ has finite length for every $0\leq i <j$. We prove that if $R$ is $F$-injective in characteristic $p>0$ or Du Bois in characteristic $0$, then the truncated dualizing complex $\tau_{>-j}\omega_R^\mydot$ is quasi-isomorphic to a complex of $k$-vector spaces. As a consequence, $F$-injective or Du Bois singularities with isolated non-Cohen-Macaulay locus are Buchsbaum. Moreover, when $R$ has $F$-rational or rational singularities on the punctured spectrum, we obtain stronger results generalizing \cite{MaF-injectivityandBuchsbaumsingularities} and \cite{IshidaIsolatedDuBoissingularities}.
\end{abstract}

\section{Introduction}

In this paper, we study the dualizing complexes of $F$-injective and Du~Bois singularities.  We prove the following result.

\begin{mainthma}[\autoref{theorem--main theorem on F-injective}, \autoref{corollary--main corollary on DB}]
\label{thma}
Suppose that $(R,\m,k)$ is a Noetherian local ring such that $H^i_{\m}(R)$ has finite length\footnote{Or dually, that $\myH^{-i} \omega_R^{\mydot}$ has finite length for $0 \leq i < j$ (provided a dualizing complex exists). Under mild conditions this is equivalent to saying that the non-Cohen-Macaulay locus on $\mathrm{Spec}(R)$ has codimension $j$.} for every $0\leq i <j$.  Suppose one of the following conditions hold:
\begin{enumerate}
\item $R$ is of characteristic $p$ and is $F$-injective.
\item $R$ is essentially of finite type over a field of characteristic zero and is Du Bois.
\end{enumerate}
Then the truncated dualizing complex $\tau_{>-j}\omega_R^\mydot$ is quasi-isomorphic to a complex of $k$-vector spaces.  Equivalently, $\tau^{< j} \myR \Gamma_{\m}(R)$ is quasi-isomorphic to a complex of $k$-vector spaces.  In particular, these truncated complexes split into a direct sum of their cohomologies.
\end{mainthma}

In the case that $j = \dim R$, the condition proven in the theorem is also known as \emph{Buchsbaum} \cite{StuckradandVogelBuchsbaumringsandapplications}, \cite{SchenzelApplicationsOfDualizingComplexes}.  Buchsbaum should be thought of as a minimal generalization of the Cohen-Macaulay condition.  In the special case of $j = \dim R$, our result says that:
\begin{enumerate}
\item $F$-injective singularities with isolated non-Cohen-Macaulay locus are Buchsbaum, reproving a result of \cite{MaF-injectivityandBuchsbaumsingularities}.
\item Du~Bois singularities with isolated non-Cohen-Macaulay locus are Buchsbaum, generalizing the case of normal isolated singularities \cite{IshidaIsolatedDuBoissingularities} and answering a question of Shunsuke Takagi.
\end{enumerate}

Moreover, in characteristic $p>0$, we define the {\it $*$-truncation} or {\it tight closure truncation} $\tau^{<d,*}\myR\Gamma_\m R$, to be the object $D$ in the derived category of $R$-modules such that we have an exact triangle: $$D \to \myR\Gamma_\m R\to (H_\m^d(R)/0^*_{H_\m^d(R)})[-d]\xrightarrow{+1}.$$ This complex $\tau^{<d,*}\myR\Gamma_\m R$ has the property that its lower cohomologies are $H_\m^i(R)$ while its top cohomology is $0^*_{H_\m^d(R)}$.  Indeed, this complex is just Matlis dual to $C$ where
\[
\tau(\omega_R)[d] \to \omega_R^{\mydot} \to C \xrightarrow{+1}
\]
is a triangle and $\tau(\omega_R)$ is the parameter test module \cite{Smithtightclosureofparameterideals}, \cite{BlickleSchwedeTuckerF-singularitiesvialterations}.  Our second main result is the following:

\begin{mainthmb}[\autoref{theorem--main theorem on F-injective tight closure}]
\label{thmb}
Let $(R,\m, k)$ be an excellent equidimensional Noetherian local ring that is $F$-rational on the punctured spectrum. If $R$ is $F$-injective then $\tau^{<d,*}\myR\Gamma_\m R$ is quasi-isomorphic to a complex of $k$-vector spaces. In particular, $\tau^{<d,*}\myR\Gamma_\m R$ splits into a direct sum of its cohomologies.
\end{mainthmb}

We note that if $\tau^{<d,*}\myR\Gamma_\m R$ splits into its cohomologies, then so does $\tau^{<d}(\tau^{<d,*}\myR\Gamma_\m R)=\tau^{<d}\myR\Gamma_\m R$. Hence Theorem B generalizes Theorem A in the case $j=d=\dim R$ (and $R$ is $F$-rational on the punctured spectrum), because it encodes information about the top local cohomology. We also mention that the full complex $\myR\Gamma_\m R$ never splits into its cohomologies (in the derived category) unless $R$ is already Cohen-Macaulay, see \autoref{corollary--full dualizing complex never splits}.

In characteristic zero, we also obtain an analogous result that in some ways is even stronger since it holds even without a Du~Bois / $F$-injective hypothesis.  Indeed if $\pi : W \to X$ is a resolution of singularities of some reduced scheme $X$, then we have the exact triangle
\[
\pi_* \omega_W [d] \to \myR \sHom_{\O_X}(\DuBois{X}, \omega_X^{\mydot}) \to C \xrightarrow{+1}
\]
When $X$ has Du~Bois singularities $\DuBois{X} \qis \O_X$ and the complex $C$ is analogous to the $C$ described above in characteristic $p > 0$.  However, even without this assumption we have the following theorem.
\begin{mainthmc}[\autoref{thm.MainTheoremOnDuBois}, \autoref{corollary--main corollary on DB punctured rational}]
With notation as above, suppose that $X = \Spec R$ is the spectrum of a local ring $(R,\bm, k)$ of essentially finite type over a field of characteristic zero and $\myH^{-i}(C)$ has finite length for all $i$ (this last condition happens automatically if $R$ has rational singularities on the punctured spectrum).  Then $C$ is quasi-isomorphic to a complex of $k$-vector spaces and in particular, it splits into a direct sum of its cohomologies.
\end{mainthmc}
\vskip 9pt
\noindent
\subsection{This paper's history}  The first version of this paper that appeared on the arXiv was written only by the second and third authors.  Shortly after it appeared the first author contacted the second and third authors with an alternate proof of Main Theorem A (at that time, we required the residue field to be perfect).  Together, all the authors generalized the strategy of the new proof to the case where the residue field is not necessarily perfect.  The original proof can now be found in \autoref{subsec.altProofs}.  The first author also suggested a more conceptual proof of \autoref{corollary--full dualizing complex never splits} which lead to the more generalized statement \autoref{proposition--local cohomology complex are indecomposable}.

\vskip 9pt
\noindent
\emph{Acknowledgements:}  We would like to thank Shunsuke Takagi for several useful discussions and for bringing these questions to our attention.  We would also like to thank S\'andor Kov\'acs, Zsolt Patakfalvi and Sean Sather-Wagstaff for useful discussions.  Finally we thank Lance Miller, Kazuma Shimomoto and the referees for comments on previous drafts of this paper.

\section{Preliminaries}
Throughout this paper, all rings are commutative with multiplicative identity 1 and all schemes are separated.  In most cases, we work with Noetherian rings and we also assume all Noetherian rings and schemes have dualizing complexes.  When working with dualizing complexes on Noetherian local rings, we always assume that they are normalized, which means the first nonzero cohomology is in degree $-\dim R$. In characteristic $p > 0$, we will typically assume that our Noetherian rings are \emph{$F$-finite}, which means that the Frobenius morphism is a finite morphism.  We recall that $F$-finite Noetherian rings are always excellent \cite{KunzOnNoetherianRingsOfCharP} and always have dualizing complexes \cite{Gabber.tStruc}.

For a ring $R$, we use $D(R)$ to denote the derived category of $R$-modules.  For $I \subseteq R$ an ideal, we write $D_I(R)$ for the full subcategory of $D(R)$ spanned by objects $K \in D(R)$ such that $K$ restricts to $0$ in $D(U)$, where $U = \mathrm{Spec}(A) - \mathrm{Spec}(R/I)$

We remind the reader of the definitions of $F$-injective and Du~Bois singularities.
\begin{definition}[$F$-injective singularities, \cite{FedderFPureRat}]
Suppose $(R, \bm)$ is an $F$-finite Noetherian local ring of characteristic $p > 0$.  Then $R$ is \emph{$F$-injective} if $H^i_{\bm}(R) \xrightarrow{F} H^i_{\bm}(F_* R)$ is injective for all $i \geq 0$.  Or dually that $\myH^{-i} F_* \omega_R^{\mydot} \to \myH^{-i} \omega_{R}^{\mydot}$ is surjective for all $i$.
\end{definition}

\begin{definition}[Du~Bois singularities, \cite{DuBoisMain,KovacsSchwedeDuBoisSurvey}]
\label{def.DuBoisSings}
Suppose that $X$ is a scheme essentially of finite type over a field of characteristic zero.  We say that $X$ is \emph{Du~Bois} if the canonical map $\O_X \to \DuBois{X}$ is a quasi-isomorphism.  A quick way to define $\DuBois{X}$ is as follows.  If $X \subseteq Y$ can be embedded in a smooth scheme and $\pi : \tld Y \to Y$ is a log resolution of $X \subseteq Y$, or an embedded resolution of $X \subseteq Y$ such that the reduced exceptional divisor of $\pi$ is SNC and intersects the strict transform $\tld X$ in a SNC divisor.  In either case, let $\overline{X} = (\pi^{-1}(X))_{\red}$.  Then $\DuBois{X} = \myR \pi_* \O_{\overline{X}}$, see \cite[Theorem 6.4]{KovacsSchwedeDuBoisSurvey}
\end{definition}

We assume that the readers are already familiar with some of the basic properties of $F$-injective and Du~Bois singularities, see for instance \cite{FedderFPureRat,KovacsSchwedeDuBoisSurvey}.

The following result was previously used to give another proof of Schenzel's homological criterion of Buchsbaum singularities \cite{SchenzelApplicationsOfDualizingComplexes}.

\begin{lemma}[Proposition 2.4.3 in \cite{StuckradandVogelBuchsbaumringsandapplications}]
\label{lemma--complex of vector spaces}
Let $(R,\m, k)$ be a Noetherian local ring and let $f^\mydot$: $K^\mydot\to L^\mydot$ be a homomorphism of complexes of $R$-modules such that
\begin{enumerate}
\item $K^\mydot$ is a complex of $k$-vector spaces.
\item $\myH^i(f^\mydot)$: $\myH^i(K^\mydot)\to \myH^i(L^\mydot)$ is a surjective homomorphism for every $i$.
\end{enumerate}
Then $L^\mydot$ is quasi-isomorphic to a complex of $k$-vector spaces.
\end{lemma}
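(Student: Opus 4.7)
The plan is to construct an explicit subcomplex $M^\mydot \hookrightarrow L^\mydot$ whose terms are $k$-vector spaces and whose inclusion is a quasi-isomorphism. The guiding observation is that the chain map $f^\mydot$ sends $i$-cycles of $K^\mydot$ into $i$-cycles of $L^\mydot$, producing $k$-vector subspaces of $Z^i(L^\mydot)$ that, by hypothesis (2), already cover every cohomology class.

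Concretely, I would set $V^i := f^i(Z^i(K^\mydot)) \subseteq Z^i(L^\mydot)$. Since $Z^i(K^\mydot)$ is a submodule of $K^i$, which by (1) is annihilated by $\m$, each $V^i$ is a $k$-vector space. Unpacking the definition of $H^i(f^\mydot)$, the surjectivity assumption (2) is precisely the identity $V^i + B^i(L^\mydot) = Z^i(L^\mydot)$, i.e., $V^i$ surjects onto $H^i(L^\mydot)$.

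The remaining step is a small piece of linear algebra over $k$. Put $U^i := V^i \cap B^i(L^\mydot)$, a $k$-subspace of $V^i$, and choose a $k$-vector space complement $W^i \subseteq V^i$ with $V^i = W^i \oplus U^i$; this is possible because $V^i$ is a $k$-vector space. Since $\m$ annihilates $V^i$, every $k$-subspace of $V^i$ is automatically an $R$-submodule, so $W^i \subseteq L^i$ is an $R$-submodule lying inside $Z^i(L^\mydot)$.

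Finally, let $M^\mydot$ be the complex with $M^i := W^i$ and zero differentials; this is a subcomplex of $L^\mydot$ whose terms are $k$-vector spaces. On cohomology, the inclusion induces the map $W^i = H^i(M^\mydot) \to H^i(L^\mydot)$, which is injective because $W^i \cap B^i(L^\mydot) \subseteq W^i \cap U^i = 0$, and surjective because $W^i$ and $V^i = W^i + U^i$ have the same image modulo $B^i(L^\mydot)$, and that image is all of $H^i(L^\mydot)$ by the previous paragraph. Hence $M^\mydot \hookrightarrow L^\mydot$ is the desired quasi-isomorphism. I do not foresee any serious obstacle: the argument is just cycle/boundary bookkeeping over $k$, with (1) ensuring the relevant subspaces are $k$-linear and (2) ensuring they hit all of cohomology.
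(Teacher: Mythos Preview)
Your argument is correct: the subcomplex $M^\mydot$ with $M^i = W^i \subseteq Z^i(L^\mydot)$ and zero differentials is indeed a subcomplex of $k$-vector spaces, and the injectivity/surjectivity check on cohomology goes through exactly as you wrote.

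Note, however, that the paper does not supply its own proof of this lemma; it simply cites St\"uckrad--Vogel. What the paper \emph{does} prove is the dual/derived-categorical version (\autoref{lem:ComplexofVectorSpaces}), where one has an injective map on cohomology $K \to L$ with $L \in D(k)$. There the argument runs in the opposite direction: split $L \simeq \bigoplus_i \myH^i(L)[-i]$ in $D(k)$, choose complements $\myH^i(L) \simeq f(\myH^i(K)) \oplus Q_i$, and use the resulting \emph{projection} $L \to K' := \bigoplus_i f(\myH^i(K))[-i]$ so that the composite $K \to L \to K'$ is a quasi-isomorphism. Your proof is essentially the mirror image of this: instead of projecting onto a split summand you \emph{embed} a complement $W^i$ of the boundaries inside the cycle image. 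Both arguments hinge on the same linear-algebra fact (splitting a subspace inside a $k$-vector space), just applied on opposite sides of the map. Your version has the minor advantage of staying at the level of honest complexes rather than passing to $D(R)$, which is arguably closer in spirit to the original St\"uckrad--Vogel argument; the paper's version is phrased so as to apply directly in the derived category without choosing representatives.
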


In this article we will need the following slight generalization of the dual version of this lemma in the derived category and for potentially non-Noetherian rings.

\begin{lemma}
\label{lem:ComplexofVectorSpaces}
Let $(R,\m,k)$ be a local ring. Fix $K \in D(R)$, $L\in D(k)$. Assume there exists a map $f$: $K \to L$ in $D(R)$ such that the induced maps $\myH^i(K) \to \myH^i(L)$ are injective. Then $K$ comes from an object of $D(k)$ under the forgetful functor $D(k) \to D(R)$, i.e., $K$ is quasi-isomorphic to a complex of $k$-vector spaces.
\end{lemma}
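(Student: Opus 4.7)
The plan is to establish the stronger statement $K \qis \bigoplus_i \myH^i(K)[-i]$ in $D(R)$; since the right-hand side is manifestly a complex of $k$-vector spaces, this implies the lemma (and is in fact equivalent to it, since over the field $k$ every complex splits into its cohomology). The opening observation is that each $\myH^i(K)$ is automatically a $k$-vector space: the $R$-linear injection $\myH^i(K) \hookrightarrow \myH^i(L)$ realizes $\myH^i(K)$ as a submodule of something annihilated by $\m$, so $\m \cdot \myH^i(K) = 0$.

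I would then induct on the cohomological amplitude of $K$ (which is finite in the paper's intended applications, where $K$ is a truncation of $\myR\Gamma_\m R$ or of a dualizing complex). For the inductive step, suppose $K$ has nonzero cohomology in degrees $[a,b]$ with $b > a$, and consider the truncation triangle
\[
\tau^{\leq b-1} K \longrightarrow K \longrightarrow \myH^b(K)[-b] \xrightarrow{\delta} (\tau^{\leq b-1} K)[1].
\]
The restriction $\tau^{\leq b-1}f \colon \tau^{\leq b-1}K \to \tau^{\leq b-1}L$ still satisfies the hypotheses (target is a complex of $k$-vector spaces, cohomology maps are still injective), so by induction $\tau^{\leq b-1}K \qis \bigoplus_{i<b}\myH^i(K)[-i]$ in $D(R)$. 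It therefore suffices to show that the boundary $\delta$ vanishes; this would split the triangle and give $K \qis \bigoplus_{i\leq b}\myH^i(K)[-i]$.

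To kill $\delta$, I would push the triangle forward along $f$ to compare with the truncation triangle of $L$. Because $L$ already lives in $D(k)$, we have $L \qis \bigoplus_i \myH^i(L)[-i]$, so the analogous boundary in the $L$-triangle is zero; naturality gives $(\tau^{\leq b-1}f)[1] \circ \delta = 0$. Using the splittings above, $\delta$ is an element of $\bigoplus_{i<b}\Ext^{b-i+1}_R(\myH^b(K),\myH^i(K))$, and this vanishing amounts to the claim that $\delta$ dies after the map to $\bigoplus_{i<b}\Ext^{b-i+1}_R(\myH^b(K),\myH^i(L))$ induced by the inclusions $\myH^i(K) \hookrightarrow \myH^i(L)$. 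The key point is that each such inclusion is $R$-linearly \emph{split}: both modules are annihilated by $\m$, so the $R$-structure factors through $k$, and inclusions of $k$-vector spaces always admit linear splittings. Hence the induced maps on $\Ext^*_R(\myH^b(K),-)$ are split injective, forcing $\delta=0$.

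The main substantive step is this final splitting argument; the rest (induction, truncation, naturality) is a formal exercise in triangulated categories. The argument relies crucially on the fact that $\m$ annihilates every relevant cohomology module, so that $R$-linear and $k$-linear splittings coincide; without this, injections of $R$-modules need not be split even when the quotient is a $k$-vector space. If $K$ is unbounded one would need a Postnikov / homotopy-limit supplement to glue together the bounded truncations, but this is not needed for the uses in this paper.
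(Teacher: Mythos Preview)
Your argument is essentially correct and proves the bounded case, but it takes a different and more laborious route than the paper's. The paper's proof is a two-line construction with no induction: split $L \simeq \bigoplus_i \myH^i(L)[-i]$ in $D(k)$, choose $k$-linear retractions $\myH^i(L) \twoheadrightarrow f(\myH^i(K))$, assemble these into a map $L \to K' := \bigoplus_i f(\myH^i(K))[-i]$ in $D(k)$, and observe that the composite $K \to L \to K'$ is a quasi-isomorphism in $D(R)$. This handles unbounded $K$ directly, with no truncation triangles and no Postnikov supplement.

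One step in your argument is stated a bit loosely. After you split $\tau^{\leq b-1}K \simeq \bigoplus_{i<b}\myH^i(K)[-i]$ and $\tau^{\leq b-1}L \simeq \bigoplus_{i<b}\myH^i(L)[-i]$, the map $\tau^{\leq b-1}f$ between them is only \emph{upper-triangular}: its diagonal entries are the $\myH^i(f)$, but there can be nonzero off-diagonal components in $\Ext_R^{>0}(\myH^i(K),\myH^j(L))$ for $j<i$. So the vanishing $(\tau^{\leq b-1}f)\circ\delta = 0$ does not literally ``amount to'' $\delta$ dying under the diagonal map $\bigoplus_i(\myH^i(f))_*$. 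You can still conclude $\delta = 0$, either by a short downward induction on the components of $\delta$ (the top component $\delta_{b-1}$ dies because the diagonal entry is split injective; then the next, etc.), or more conceptually by noting that an upper-triangular map with split-injective diagonal is itself split injective in $D(R)$. Either way the fix is routine, but as written the step is glossed.
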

\begin{proof}
We can choose an isomorphism $L \simeq \oplus_i \myH^i(L)[-i]$ in $D(k)$. As the map $f$: $\myH^i(K) \to \myH^i(L)$ is injective, we can choose a decomposition $\myH^i(L) \simeq f(\myH^i(K)) \oplus Q_i$. These choices give a map $L \simeq \oplus_i \myH^i(L)[-i] \to \oplus_i f(\myH^i(K))[-i] =: K'$ in $D(k)$. The composite $K \to L \to K'$ in $D(R)$ is a quasi-isomorphism by construction, so $K \simeq K'$ comes from $D(k)$.
\end{proof}

\section{$F$-injective singularities}
\label{sec:F-injective}

In this section, we prove our main results in characteristic $p$.  
The proofs in this section rely crucially on the following observation concerning perfect rings:

\begin{proposition}
\label{prop:PerfectFullyFaithfulRoS}
Let $A \to B$ be a surjection of perfect $\F_p$-algebras. Let $K \in D^b(A)$ be a complex such that each $H^i(K)$ is a $B$-module. Then $K \simeq K \otimes_A^\myL B$ via the canonical map, and thus $K$ comes from $D^b(B)$ via the forgetful functor $D^b(B) \to D^b(A)$.
\end{proposition}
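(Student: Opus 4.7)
The plan is to prove this by devissage from the key special case $K = B$. Setting $I := \ker(A \to B)$, the triangle $K \otimes_A^\myL I \to K \to K \otimes_A^\myL B$ (obtained from $0 \to I \to A \to B \to 0$ by applying $K \otimes_A^\myL (-)$) shows that the canonical map is an isomorphism precisely when $K \otimes_A^\myL I = 0$. The first structural input from perfectness is the observation that $I = I^2$: for $x \in I$, perfectness of $A$ gives $y \in A$ with $x = y^p$, and reducedness of $B = A/I$ (from perfectness of $B$) then forces $y \in I$, so $x = y \cdot y^{p-1} \in I^2$ (using $p \geq 2$). Consequently $B \otimes_A B = B$ underived, and $\Tor_1^A(B, B) = I/I^2 = 0$.

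The central and hardest step is to upgrade this to $\Tor_i^A(B, B) = 0$ for all $i \geq 1$, equivalently $B \otimes_A^\myL B \simeq B$. This is the one place where perfectness of both $A$ and $B$ is genuinely used beyond the idempotence $I = I^2$. The cleanest route is via the vanishing of the cotangent complex $L_{B/A}$ for a map of perfect $\F_p$-algebras: the Frobenius, which is an automorphism of both $A$ and $B$ by perfectness, induces an endomorphism of $L_{B/A}$ that is simultaneously invertible and zero (the latter because $d(x^p) = p x^{p-1} dx = 0$ in characteristic $p$), forcing $L_{B/A} = 0$. Combined with $A \to B$ being a surjection with $I = I^2$, a standard argument using a simplicial resolution of $B$ by polynomial $A$-algebras and the Jacobi--Zariski sequence then promotes this vanishing into $\Tor_i^A(B, B) = 0$ for all $i \geq 1$.

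Granting $B \otimes_A^\myL B \simeq B$, the proposition follows by formal devissage. For a $B$-module $M$ placed in degree zero, the projection formula gives
\[
M \otimes_A^\myL B \simeq M \otimes_B^\myL (B \otimes_A^\myL B) \simeq M \otimes_B^\myL B \simeq M,
\]
with the composite being the canonical map. For a general bounded $K$ with $B$-module cohomologies, truncation triangles $\tau^{<n} K \to K \to \tau^{\geq n} K$, together with the five lemma applied to the induced map of triangles comparing $K$ with $K \otimes_A^\myL B$, permit an induction on the amplitude of $K$ to reduce to the single-module case just handled. The main obstacle is the middle step: while $\Tor_1 = 0$ is immediate from $I = I^2$, the vanishing of the higher $\Tor$s is considerably deeper and genuinely uses that both $A$ and $B$ are perfect, not merely that the kernel $I$ is idempotent or radical.
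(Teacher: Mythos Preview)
Your proposal follows exactly the same skeleton as the paper's proof: reduce by d\'evissage on triangles to the case $K = M[0]$ for a $B$-module $M$, then use the projection formula $M \otimes_A^{\myL} B \simeq M \otimes_B^{\myL} (B \otimes_A^{\myL} B)$ together with the key isomorphism $B \otimes_A^{\myL} B \simeq B$. The only difference is that the paper simply cites \cite[Lemma~5.10]{BhattScholzeWittGr} for this last isomorphism, whereas you attempt to supply a proof via the vanishing of $L_{B/A}$.

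Your argument that $L_{B/A} = 0$ is correct, and it does imply $B \otimes_A^{\myL} B \simeq B$; however, the bridge you name is not quite the right one. The Jacobi--Zariski (transitivity) triangle relates cotangent complexes along a composite of ring maps, not cotangent complexes to $\Tor$-groups, so invoking it here does not do the job you want. A clean way to close the gap: derived base change gives $L_{(B \otimes_A^{\myL} B)/B} \simeq L_{B/A} \otimes_A^{\myL} B = 0$, and the section $B \to B \otimes_A^{\myL} B$ of the multiplication map is a morphism of connective simplicial commutative rings that is an isomorphism on $\pi_0$ (since $I = I^2$) with vanishing relative cotangent complex; a Postnikov/Hurewicz induction then forces it to be an equivalence. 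Alternatively, the argument in Bhatt--Scholze is more direct and bypasses cotangent complexes entirely, using that elements of $I$ admit arbitrary $p$-power roots in $I$. Either way, once that step is made precise your proof is complete and matches the paper's.
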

\begin{proof}
We must check that the canonical map $K \to K \otimes_A^\myL B$ is an isomorphism for $K$ as above. This assertion is stable under exact triangles, so we reduce to $K = M[0]$ being a $B$-module $M$ placed in degree $0$. But then $K \otimes_A^\myL B \simeq M[0] \otimes_B^\myL (B \otimes_A^\myL B)$, so the claim follows from \cite[Lemma 5.10]{BhattScholzeWittGr}, which implies that $B \otimes_A^\myL B \simeq B$ via the multiplication map.
\end{proof}

\begin{remark}
Let $A \to B$ be a surjection of perfect $\F_p$-algebras. Using the same method used to prove \autoref{prop:PerfectFullyFaithfulRoS}, one can show the following: the forgetful functor $D(B) \to D(A)$ is fully faithful, and the essential image comprises those $K \in D(A)$ such that each $H^i(K)$ is a $B$-module. We do not prove this here as we do not need it.
\end{remark}

Using \autoref{prop:PerfectFullyFaithfulRoS}, we obtain a criterion for when a ``finite length $\phi$-complex'' is pushed forward from the closed point:

\begin{lemma}
\label{lem:PhiModuleCriterion}
Let $(R,\m,k)$ be a local ring of characteristic $p > 0$ with absolute Frobenius $F:R \to R$. Fix $K \in D(R)$ such that each $H^i(K)$ has finite length. Assume we are given a map $\phi_K:K \to F_* K$ which is injective on each $H^i$. Then $K$ comes from $D(k)$.
\end{lemma}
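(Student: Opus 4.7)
The plan is to exploit the $\phi$-module structure to ``perfect'' $K$ along Frobenius, descend the perfected object to a complex of $k_{\mathrm{perf}}$-vector spaces via \autoref{prop:PerfectFullyFaithfulRoS}, and then descend further to $D(k)$ using \autoref{lem:ComplexofVectorSpaces}. Let $R_{\mathrm{perf}} := \operatorname{colim}(R \to F_*R \to F_*^2 R \to \cdots)$ denote the perfection of $R$, a perfect $\F_p$-algebra, and observe that $R_{\mathrm{perf}} / \m R_{\mathrm{perf}} = k_{\mathrm{perf}}$ since quotients commute with filtered colimits.

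First, I would form $K_\infty := \hocolim_n F_*^n K$ in $D(R)$ along the transition maps $F_*^n \phi_K$. Because filtered homotopy colimits commute with cohomology, $\myH^i(K_\infty) = \operatorname{colim}_n F_*^n \myH^i(K)$, and the injectivity of $\phi_K$ on cohomology forces each structural map $\myH^i(K) \to \myH^i(K_\infty)$ to be injective. Next, since $\myH^i(K)$ has finite length, some $\m^N$ annihilates it; the $R$-action on $F_*^n \myH^i(K)$ is twisted by $r \mapsto r^{p^n}$, so any $r \in \m$ acts as zero on $F_*^n \myH^i(K)$ as soon as $p^n \geq N$. Passing to the colimit, $\m \cdot \myH^i(K_\infty) = 0$, so $\myH^i(K_\infty)$ is naturally a module over $R_{\mathrm{perf}}/\m R_{\mathrm{perf}} = k_{\mathrm{perf}}$.

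I would then lift $K_\infty$ to an object of $D(R_{\mathrm{perf}})$ using a telescope model for the homotopy colimit, which is manifestly an $R_{\mathrm{perf}}$-complex. Invoking \autoref{prop:PerfectFullyFaithfulRoS} for the surjection $R_{\mathrm{perf}} \twoheadrightarrow k_{\mathrm{perf}}$ of perfect $\F_p$-algebras yields that $K_\infty$ comes from $D(k_{\mathrm{perf}})$, hence is quasi-isomorphic to a complex of $k_{\mathrm{perf}}$-vector spaces. Restricting scalars along the inclusion $k \hookrightarrow k_{\mathrm{perf}}$, view $K_\infty$ as an object of $D(k)$. The canonical map $K \to K_\infty$ in $D(R)$ has injective cohomology by the first step, so \autoref{lem:ComplexofVectorSpaces} applies and yields that $K$ itself comes from $D(k)$.

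The main obstacle I expect is the boundedness hypothesis in \autoref{prop:PerfectFullyFaithfulRoS}, which is stated for $D^b$. When $K \in D^b(R)$---the case that matters for the paper's applications to $\tau^{<j}\myR\Gamma_\m R$---the argument goes through verbatim. For unbounded $K$, one should either truncate and argue for each $\tau^{[a,b]}K$ separately before reassembling, or observe that the proof of \autoref{prop:PerfectFullyFaithfulRoS} (which reduces to the identity $B \otimes_A^{\myL} B \simeq B$) extends to unbounded complexes. A secondary subtlety is producing a genuine $R_{\mathrm{perf}}$-linear lift of $K_\infty$, but this is routine from any telescope presentation of the Frobenius colimit.
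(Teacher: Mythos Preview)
Your overall strategy is exactly the one the paper uses: perfect along Frobenius, apply \autoref{prop:PerfectFullyFaithfulRoS} to descend $K_\infty$ to the perfected residue field, and then invoke \autoref{lem:ComplexofVectorSpaces} for the injection $K \to K_\infty$. However, there is one genuine error. The claim $R_{\mathrm{perf}}/\m R_{\mathrm{perf}} = k_{\mathrm{perf}}$ is false in general: for $R = \F_p[t]_{(t)}$ one has $\m R_{\mathrm{perf}} = t R_{\mathrm{perf}}$, and $t^{1/p}$ is a nonzero nilpotent in $R_{\mathrm{perf}}/t R_{\mathrm{perf}}$. The correct quotient is by the maximal ideal $\m_\infty = \varinjlim_e F^e_*\m$ of $R_{\mathrm{perf}}$, which strictly contains $\m R_{\mathrm{perf}}$; only $R_{\mathrm{perf}}/\m_\infty = k_{\mathrm{perf}}$. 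Consequently, to invoke \autoref{prop:PerfectFullyFaithfulRoS} for the surjection $R_{\mathrm{perf}} \twoheadrightarrow k_{\mathrm{perf}}$ you need each $H^i(K_\infty)$ to be killed by $\m_\infty$, not merely by $\m R_{\mathrm{perf}}$.

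The fix is immediate and is what the paper does. Your own computation already shows that $\m$ kills $F_*^n H^i(K)$ once $p^n \geq N$; since $H^i(K) \hookrightarrow F_*^n H^i(K)$ is $R$-linear and injective, this forces $\m \cdot H^i(K) = 0$ outright. Then $F^e_*\m$ kills $F^e_* H^i(K)$ for every $e$, so in the colimit $\m_\infty$ kills $H^i(K_\infty)$, and \autoref{prop:PerfectFullyFaithfulRoS} applies. (Your remark about boundedness is apt: the proposition is stated for $D^b$, but since its proof reduces to the identity $B \otimes^{\myL}_A B \simeq B$, it holds for unbounded complexes as well; alternatively, the applications only use bounded $K$.)
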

\begin{proof}
Let $R_\infty$ be the perfection of $R$, so $R_\infty \simeq \varinjlim_e F^e_* R$ is the direct limit of copies of $R$ along (twists of) the Frobenius map $F:R \to F_* R$. In particular, $R_\infty$ is a local ring (as Frobenius is a homeomorphism), is perfect (by construction), has maximal ideal $\m_\infty := \varinjlim_e F^e_* \m$ (with the same transition maps as before), and has residue field $k_\infty$, the perfection of $k$. Define $K_{\infty}$ to be the homotopy-colimit (\ie derived colimit)
%
%
\[ \hocolim_e F^e_* K := \hocolim\Big(K \stackrel{\phi_K}{\to} F_* K \stackrel{F_*(\phi_K)}{\to} F^2_* K \to ... \Big)\]
as in \cite[Tag 0A5K]{stacks-project}.  
Then $K_\infty \in D(R_\infty)$. Note that we have
\[ H^i(K_\infty) := \varinjlim_e F^e_* H^i(K) := \varinjlim\Big(H^i(K) \stackrel{\phi_K}{\to} F_* H^i(K) \stackrel{F_*(\phi_K)}{\to} F^2_* H^i(K) \to ... \Big),\]
as cohomology commutes with direct limits. Also, the canonical map $K \to K_\infty$ is injective on each $H^i(-)$ by the assumption on $\phi_K$ (and because $F^e_*$ is exact). We claim that $K_\infty$ lies in the essential image of the forgetful functor $D(k_\infty) \to D(R_\infty)$; this suffices to prove the result by \autoref{lem:ComplexofVectorSpaces}. To see this, using \autoref{prop:PerfectFullyFaithfulRoS} for the map $R_\infty \to k_\infty$, it is enough to show that $H^i(K_\infty)$ is killed by $\m_\infty \subset R_\infty$. By the formula for $H^i(K_\infty)$ above and the formula $\m_\infty = \varinjlim_e F^e_* \m$ (with the transition maps being Frobenius), it suffices to show that $\m$ kills $H^i(K)$. As $K$ has finite length homology, there exists some $c$ such that $\m^c$ kills $H^i(K)$. But then $\m$ kills $F^e_* H^i(K)$ if $e \geq \log_p(c)$ since $F^e(\m) \subset \m^c$ for such $e$.  As the map $H^i(K) \to F^e_* H^i(K)$ induced by an $e$-fold iterate of $\phi_K$ is injective, it then follows that $\m$ kills $H^i(K)$, as wanted.
\end{proof}

This lemma specializes to prove the following result:

\begin{theorem}
\label{theorem--main theorem on F-injective}
Let $(R,\m,k)$ be a Noetherian local ring of characteristic $p>0$.  Assume there exists some $j \geq 0$ such that $H^i_\m(R)$ has finite length for every $i < j$, and that the Frobenius on $R$ induces an injective map on $H^i_\m(R)$ for $i < j$ (e.g., $R$ is $F$-injective). Then $\tau^{< j} \myR\Gamma_\m(R) \in D(R)$ comes from an object of $D(k)$. In other words, $\tau^{< j} \myR\Gamma_\m(R)$ and $\tau_{>-j}\omega_R^\mydot$ are quasi-isomorphic to complexes of $k$-vector spaces.
\end{theorem}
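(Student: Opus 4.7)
The plan is to apply \autoref{lem:PhiModuleCriterion} to the complex $K := \tau^{<j}\myR\Gamma_\m(R) \in D(R)$. Both hypotheses of that lemma are almost immediate from the setup of the theorem, so the proof is essentially a packaging argument.

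First I would construct the ``Frobenius action'' $\phi_K : K \to F_* K$. The absolute Frobenius $F \colon R \to F_* R$ induces a map $\myR\Gamma_\m(R) \to \myR\Gamma_\m(F_* R)$, and since $\m$ and $F^{-1}(\m)$ have the same radical, the right-hand side is canonically identified with $F_*\myR\Gamma_\m(R)$. Since $F_*$ is exact, it commutes with the truncation functor $\tau^{<j}$, so applying $\tau^{<j}$ produces the desired map $\phi_K : K \to F_* K$. On cohomology, $H^i(K)$ equals $H^i_\m(R)$ for $i < j$ (and vanishes for $i \geq j$), and the map $H^i(\phi_K)$ is just the Frobenius action $H^i_\m(R) \to F_* H^i_\m(R)$.

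Now the hypotheses of \autoref{lem:PhiModuleCriterion} are exactly what we have assumed: each $H^i(K)$ has finite length by hypothesis, and $H^i(\phi_K)$ is injective for $i < j$ by the $F$-injectivity assumption. The lemma then yields that $K = \tau^{<j}\myR\Gamma_\m(R)$ is quasi-isomorphic to a complex of $k$-vector spaces, as desired.

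For the equivalent statement about $\tau_{>-j}\omega_R^\mydot$, one applies local duality. The (derived) Matlis dual of $\tau^{<j}\myR\Gamma_\m(R)$ is $\tau_{>-j}\omega_R^\mydot$, and the Matlis dual of a bounded complex of finite-length $k$-vector spaces is again a complex of $k$-vector spaces (the $k$-action on each module passes through the Matlis dualization since $k$ acts on $E_R(k)$). Hence the two truncations are simultaneously complexes of $k$-vector spaces. The main ``obstacle,'' such as it is, has already been handled in \autoref{lem:PhiModuleCriterion}; what remains here is bookkeeping, so the essential content of this theorem is just the recognition that local cohomology with its Frobenius action is precisely a ``$\phi$-complex'' of the kind considered there.
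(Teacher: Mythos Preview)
Your proposal is correct and matches the paper's own proof essentially verbatim: set $K = \tau^{<j}\myR\Gamma_\m(R)$, note that Frobenius induces $\phi_K : K \to F_* K$, and apply \autoref{lem:PhiModuleCriterion}. The extra details you supply (why $F_*$ commutes with $\tau^{<j}$, and the Matlis-duality passage to $\tau_{>-j}\omega_R^\mydot$) are implicit in the paper's two-line argument.
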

\begin{proof}
Set $K = \tau^{< j} \myR\Gamma_\m(R)$. Then the Frobenius on $R$ induces a map $\phi_K:K \to F_* K$ that satisfies the hypotheses of \autoref{lem:PhiModuleCriterion} by assumption. Thus by \autoref{lem:PhiModuleCriterion} $\tau^{< j} \myR\Gamma_\m(R)$ comes from $D(k)$.
\end{proof}

As an immediate consequence of the above theorem, we reprove and in fact generalize the main result of \cite{MaF-injectivityandBuchsbaumsingularities}. We note that this result is also obtained using different methods in \cite{QuyShimomotoFinjectivityandFrobeniusclosure}.

\begin{corollary}
Let $(R,\m, k)$ be a Noetherian local ring of characteristic $p>0$ and dimension $d$ such that $H^i_\m(R)$ has finite length for every $i < d$. Suppose Frobenius acts injectively on $H_\m^i(R)$ for every $i<d$ (e.g., $R$ is $F$-injective). Then $R$ is Buchsbaum.
\end{corollary}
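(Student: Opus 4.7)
The plan is to deduce this corollary as an immediate specialization of the preceding \autoref{theorem--main theorem on F-injective} with $j=d=\dim R$. The hypotheses there are exactly the hypotheses of the corollary when $j=d$: the modules $H^i_\m(R)$ have finite length for $i<d$, and Frobenius acts injectively on $H^i_\m(R)$ for $i<d$ (which in particular holds when $R$ is $F$-injective, by definition).

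Applying that theorem, the truncated complex $\tau^{<d}\myR\Gamma_\m(R)$ is quasi-isomorphic to a complex of $k$-vector spaces. Since any bounded complex of vector spaces over the field $k$ splits in $D(k)$ as a direct sum of its cohomologies (shifted appropriately), and since the forgetful functor $D(k) \to D(R)$ preserves such a splitting, we obtain a quasi-isomorphism
\[
\tau^{<d}\myR\Gamma_\m(R) \;\simeq\; \bigoplus_{i=0}^{d-1} H^i_\m(R)[-i]
\]
in $D(R)$, with each $H^i_\m(R)$ (for $i<d$) being a $k$-vector space, i.e.\ annihilated by $\m$.

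This is precisely Schenzel's homological characterization of the Buchsbaum property (see \cite{SchenzelApplicationsOfDualizingComplexes}, as already invoked in the introduction of the paper following \autoref{thma}): a local ring $R$ is Buchsbaum if and only if $\tau_{>-d}\omega_R^\mydot$ (equivalently, $\tau^{<d}\myR\Gamma_\m(R)$ under local duality) is quasi-isomorphic to a complex of $k$-vector spaces. Hence $R$ is Buchsbaum.

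There is really no obstacle here beyond invoking the previous theorem and quoting Schenzel's criterion; the work has already been done in proving \autoref{theorem--main theorem on F-injective}.
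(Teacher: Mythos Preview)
Your proof is correct and follows exactly the same approach as the paper: apply \autoref{theorem--main theorem on F-injective} with $j=d$ to conclude that $\tau^{<d}\myR\Gamma_\m(R)$ is quasi-isomorphic to a complex of $k$-vector spaces, and then invoke Schenzel's criterion \cite{SchenzelApplicationsOfDualizingComplexes} to deduce that $R$ is Buchsbaum. The only difference is cosmetic: you spell out the splitting into shifted cohomology modules, whereas the paper simply cites Schenzel's criterion directly.
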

\begin{proof}
Applying \autoref{theorem--main theorem on F-injective} to $j=d$, $\tau^{< d} \myR\Gamma_\m(R)$ is quasi-isomorphic to a complex of $k$-vector spaces. This implies $R$ is Buchsbaum by Schenzel's criterion \cite{SchenzelApplicationsOfDualizingComplexes}.
\end{proof}

Next we prove a stronger version of \autoref{theorem--main theorem on F-injective} when $R$ is $F$-rational on the punctured spectrum. Basically we will show that, in this case, if we truncate $\myR\Gamma_\m R$ at the $d$-th spot (resp., $\omega_R^\mydot$ at the $-d$-th spot) ``up to tight closure", it is still quasi-isomorphic to a complex of $k$-vector spaces.

We introduce some notations. We let $(R,\m, k)$ be a reduced and equidimensional local ring of characteristic $p>0$ and dimension $d$. Let $\myR\Gamma_\m R= 0\to G^0\to G^1\to \cdots\to G^{d-1}\xrightarrow{\phi} G^d\to 0.$ We define
\[
\tau^{<d,*}\myR\Gamma_\m R=0\to G^0\to G^1\to \cdots\to G^{d-1}\xrightarrow{\phi} (\im\phi)^*_{G^d}\to 0
 \]
to be the {\it ${}^*$-truncation} of $\myR\Gamma_\m R$ at the $d$-th spot (where $(-)^*$ denotes tight closure). This is a well-defined object in the derived category of $R$-modules: it is the natural object such that we have an exact triangle:\footnote{Equivalently, $\tau^{< d,*} \myR\Gamma_\m(R)[1] \in D(R)$ is the cone of the canonical composite map $\myR\Gamma_\m(R) \to H^d_\m(R)[-d] \to (H^d_\m(R)/0^*_{H^d_\m(R)})[-d]$.}
$$\tau^{<d,*}\myR\Gamma_\m R\to\myR\Gamma_\m R\to H_\m^d(R)/0^*_{H_\m^d(R)}[-d]\xrightarrow{+1}.$$

Dually, let $\omega_R^\mydot=0\to I^{-d}\to I^{-d-1}\to\cdots \to I^{-1}\to I^0\to 0$ be the normalized dualizing complex and let $\tau(\omega_R)\subseteq \omega_R$ be the parameter test submodule of $R$ (see \cite{Smithtightclosureofparameterideals} or Section 2 of \cite{BlickleSchwedeTuckerF-singularitiesvialterations} for definitions and details). We define \[
\tau^*_{>-d}\omega_R^\mydot=0\to I^{-d}/\tau(\omega_R)\to I^{-d-1}\to \cdots \to I^{-1}\to I^0\to 0
 \]
to be the ${}^*$-truncation of $\omega_R^\mydot$ at $-d$-th spot. Again this is the object in the derived category that completes the triangle:
$$\tau(\omega_R)[d] \to \omega_R^{\mydot} \to \tau^*_{>-d}\omega_R^\mydot \xrightarrow{+1}.$$

It is easy to check that $\tau^{<d,*}\myR\Gamma_\m R$ is the Matlis dual of $\tau^*_{>-d}\omega_R^\mydot$, as $\tau(\omega_R)^\vee\cong H_\m^d(R)/0^*_{H_\m^d(R)}$. We assume the readers are familiar with basic tight closure theory \cite{HochsterHunekeTC1}, in particular the tight closure of $0$ in $H_\m^d(R)$, see \cite{Smithtightclosureofparameterideals}, \cite{SmithFRatImpliesRat}, \cite{EnescuHochsterTheFrobeniusStructureOfLocalCohomology}.

\begin{theorem}
\label{theorem--main theorem on F-injective tight closure}
Let $(R,\m, k)$ be an excellent equidimensional Noetherian local ring of characteristic $p > 0$ and dimension $d$. Assume that $R$ is $F$-injective, and also that $\mathrm{Spec}(R) - \{\m\}$ is $F$-rational. Then $\tau^{< d,*} \myR\Gamma_\m(R)$ and $\tau^*_{>-d}\omega_R^\mydot$ come from $D(k)$, i.e., they are quasi-isomorphic to complexes of $k$-vector spaces.
\end{theorem}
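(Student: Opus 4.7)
The plan is to reduce Theorem B to Lemma \autoref{lem:PhiModuleCriterion} (the ``finite length $\phi$-complex'' criterion), exactly as Theorem A (\autoref{theorem--main theorem on F-injective}) was deduced, but applied to the object $K := \tau^{<d,*}\myR\Gamma_\m R$ instead of $\tau^{<d}\myR\Gamma_\m R$. To invoke that lemma I need three things: (i) each cohomology $\myH^i(K)$ has finite length; (ii) there is a natural Frobenius map $\phi_K : K \to F_*K$; (iii) $\phi_K$ is injective on each cohomology. The statement about $\tau^*_{>-d}\omega_R^\mydot$ will then follow from the first part via Matlis duality, since the triangle defining $\tau^*_{>-d}\omega_R^\mydot$ is (locally, after completion) Matlis dual to the triangle defining $\tau^{<d,*}\myR\Gamma_\m R$, and Matlis duality sends complexes of $k$-vector spaces to complexes of $k$-vector spaces.

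For step (i), I would argue as follows. Since $R$ is $F$-rational on the punctured spectrum and $F$-rationality implies Cohen--Macaulayness, the ring $R$ is Cohen--Macaulay off $\m$; combined with the assumption that $R$ is equidimensional, this gives that $H^i_\m(R)$ has finite length for $i<d$, so $\myH^i(K) = H^i_\m(R)$ is finite length in that range. For the top cohomology $\myH^d(K) = 0^*_{H^d_\m(R)}$, I would use the standard result (due to Smith \cite{Smithtightclosureofparameterideals} and others for excellent equidimensional $R$) that the $F$-rational locus of $R$ equals $V(\Ann_R 0^*_{H^d_\m(R)})$. Thus the $F$-rationality on the punctured spectrum forces $\Ann_R 0^*_{H^d_\m(R)}$ to be $\m$-primary, so $0^*_{H^d_\m(R)}$, being a submodule of the Artinian module $H^d_\m(R)$ killed by some power of $\m$, has finite length.

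For step (ii), the Frobenius gives a natural map $\myR\Gamma_\m R \to F_*\myR\Gamma_\m R$, and it preserves tight closure of $0$ in $H^d_\m(R)$ (if $c\eta^{p^e}=0$ for a test element $c$ and $e\gg 0$, then also $c(\eta^p)^{p^{e}}=0$), so it descends to a map $H^d_\m(R)/0^*_{H^d_\m(R)} \to F_*\bigl(H^d_\m(R)/0^*_{H^d_\m(R)}\bigr)$. These two maps are compatible with the defining triangle of $K$, producing by the triangulated structure a Frobenius $\phi_K : K \to F_*K$ that on cohomology is the ordinary Frobenius on $H^i_\m(R)$ for $i<d$ and the restriction of Frobenius on $H^d_\m(R)$ to $0^*_{H^d_\m(R)}$ in degree $d$. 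Step (iii) is then immediate from $F$-injectivity, which says Frobenius is injective on every $H^i_\m(R)$ (hence also on the submodule $0^*_{H^d_\m(R)}$).

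With (i)--(iii) in hand, \autoref{lem:PhiModuleCriterion} applies verbatim and yields that $\tau^{<d,*}\myR\Gamma_\m R$ comes from $D(k)$. The Matlis dual translation to $\tau^*_{>-d}\omega_R^\mydot$ uses that the Matlis dual of $H^d_\m(R)/0^*_{H^d_\m(R)}$ is $\tau(\omega_R)$, so the Matlis dual of the defining triangle of $\tau^{<d,*}\myR\Gamma_\m R$ is exactly the defining triangle of $\tau^*_{>-d}\omega_R^\mydot$. The main obstacle I anticipate is the finite-length assertion in step (i) for the tight closure cohomology $0^*_{H^d_\m(R)}$: everything else is bookkeeping, but this finiteness is exactly where the $F$-rationality on the punctured spectrum (rather than something weaker) is essential, and requires invoking the description of the $F$-rational locus via the annihilator of $0^*$.
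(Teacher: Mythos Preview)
Your proposal is correct and follows exactly the paper's approach: set $K = \tau^{<d,*}\myR\Gamma_\m(R)$, verify that the punctured $F$-rationality gives finite length cohomology and that $F$-injectivity gives injectivity of Frobenius on each $H^i(K)$, then invoke \autoref{lem:PhiModuleCriterion}. You supply more detail than the paper (the annihilator argument for finiteness of $0^*_{H^d_\m(R)}$, the construction of $\phi_K$ via the defining triangle, and the Matlis-dual passage to $\tau^*_{>-d}\omega_R^\mydot$), but the strategy is identical.
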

\begin{proof}
Set $K = \tau^{< d,*} \myR\Gamma_\m(R)$. Then the Frobenius on $R$ induces a map $\phi_K:K \to F_* K$. We claim that this satisfies the hypotheses of \autoref{lem:PhiModuleCriterion}, which suffices to prove the result. To see this, note that $H^i(K) = H^i_\m(R)$ for $i \neq d$, and $H^d(K) = 0^*_{H^d_\m(R)}$. Now the punctured $F$-rationality assumption implies that $H^i(K)$ has finite length for all $i$, while the $F$-injectivity assumption ensures that $H^i(\phi_K)$ is injective for all $i$.
\end{proof}

\subsection{Alternative proofs of the main theorems when $k$ is perfect}
\label{subsec.altProofs}
In this subsection we give a more elementary, but in some ways more involved, proof of \autoref{theorem--main theorem on F-injective}.  We start with some general lemmas.

\begin{lemma}
\label{lemma--duality of the maps}
Let $(A,\m)\to (S,\n)$ be a module-finite ring homomorphism with $A$ Noetherian and regular. Then the map $\Ext_A^i(k, S)\to H_\m^i(S)$ is the Matlis dual of the map $\myH^{-i}(\omega_S^\mydot)\to \myH^{-i}(\omega_S^\mydot\otimes_A^{\mathbf{L}}k)$.
\end{lemma}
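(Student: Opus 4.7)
The plan is to derive both sides of the claimed duality from a single map in $D(A)$, namely the canonical map $\alpha \colon \myR\Hom_A(k,S) \to \myR\Gamma_\m(S)$ coming from the $t=1$ term in the colimit presentation $\myR\Gamma_\m(S) = \varinjlim_t \myR\Hom_A(A/\m^t, S)$. Taking $\myH^i$ of $\alpha$ recovers $\Ext_A^i(k,S) \to H_\m^i(S)$, so it suffices to identify the Matlis dual of $\alpha$ with the map of complexes $\omega_S^\mydot \to \omega_S^\mydot \otimes_A^{\myL} k$.

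Set $n = \dim A$. Since $A$ is regular, $\omega_A^\mydot \simeq A[n]$, every finitely generated $A$-module is a perfect $A$-complex, and $\myR\Hom_A(k, A) \simeq k[-n]$. In particular $\omega_S^\mydot = \myR\Hom_A(S, A)[n]$, and because $S$ is perfect, the tensor-evaluation isomorphism gives a canonical identification $\omega_S^\mydot \otimes_A^{\myL} k \simeq \myR\Hom_A(S, A \otimes_A^{\myL} k)[n] = \myR\Hom_A(S, k)[n]$ under which $\omega_S^\mydot \to \omega_S^\mydot \otimes_A^{\myL} k$ is precisely the map induced by $A \twoheadrightarrow k$.

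I would then Matlis dualize $\alpha$. By Grothendieck local duality for the regular local ring $A$, one has $\myR\Gamma_\m(S)^\vee \simeq \myR\Hom_A(S, \omega_A^\mydot) = \omega_S^\mydot$. On the other end, perfectness of $k$ gives $\myR\Hom_A(k, S) \simeq \myR\Hom_A(k,A) \otimes_A^{\myL} S \simeq k[-n] \otimes_A^{\myL} S$, and then Hom-tensor adjunction together with $k^\vee = k$ and perfectness of $S$ yield $\myR\Hom_A(k, S)^\vee \simeq \myR\Hom_A(S, k)[n]$. Hence both ends of the Matlis dual of $\alpha$ are expressed as the complexes appearing in the statement.

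It remains to match the two maps. For this I would use the identification $\myR\Gamma_\m(S) \simeq \myR\Gamma_\m(A) \otimes_A^{\myL} S \simeq E[-n] \otimes_A^{\myL} S$ (valid because $S$ is perfect and $A$ is Gorenstein with $H^n_\m(A) \simeq E$), under which $\alpha$ is simply the socle inclusion $k \hookrightarrow E$ tensored with $S[-n]$. Matlis dualizing the socle inclusion returns the quotient $A \twoheadrightarrow k$, and tensoring through identifies the Matlis dual of $\alpha$ with the map $\myR\Hom_A(S, A)[n] \to \myR\Hom_A(S, k)[n]$ induced by $A \to k$, which is exactly $\omega_S^\mydot \to \omega_S^\mydot \otimes_A^{\myL} k$ under the earlier identification. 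Taking $\myH^{-i}$ completes the proof. The main subtlety to handle carefully is naturality: one must verify that the two descriptions of the identification $\omega_S^\mydot \otimes_A^{\myL} k \simeq \myR\Hom_A(S, k)[n]$ — one from the perfectness-of-$S$ tensor-evaluation morphism, the other emerging via Matlis dualization — genuinely produce the same map in $D(A)$ rather than merely abstractly isomorphic complexes.
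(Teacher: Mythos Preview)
Your proposal is correct and follows the same high-level strategy as the paper (dualize the derived map $\alpha \colon \myR\Hom_A(k,S) \to \myR\Gamma_\m(S)$ and invoke local duality), but the packaging differs. The paper's proof is a two-line application of adjunction and biduality: writing $S \simeq \myR\Hom_A(\omega_S^\mydot, \omega_A^\mydot)$ and using Hom--tensor adjunction gives a \emph{natural} identification
\[
\myR\Hom_A(k,S) \;\simeq\; \myR\Hom_A(\omega_S^\mydot \otimes_A^{\myL} k,\, \omega_A^\mydot),
\]
so that $\alpha$ is, by naturality of local duality, the Matlis dual of $\omega_S^\mydot \to \omega_S^\mydot \otimes_A^{\myL} k$ on cohomology. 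Your route instead unpacks the regular/Gorenstein structure of $A$ explicitly (the shift by $n=\dim A$, perfectness of $S$ and $k$, the realization of $\alpha$ as the socle inclusion $k \hookrightarrow E$ tensored with $S[-n]$), and then must chase the compatibility of several tensor-evaluation isomorphisms --- precisely the naturality subtlety you flag at the end. The paper's formulation sidesteps that bookkeeping entirely, since adjunction and biduality are functorial on the nose; your approach is more hands-on and concrete but requires more care to match the maps.
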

\begin{proof}
First notice that $$\mathbf{R}\Hom_A(k, S)= \mathbf{R}\Hom_A(k, \mathbf{R}\Hom_A(\omega_S^\mydot, \omega_A^\mydot))=\mathbf{R}\Hom_A(k\otimes_A^\mathbf{L}\omega_S^\mydot, \omega_A^\mydot).$$ Since $A$ is regular, $k\otimes_A^\mathbf{L}\omega_S^\mydot$ is a bounded complex. By local duality, $\myH^{i}(\mathbf{R}\Hom_A(k\otimes_A^\mathbf{L}\omega_S^\mydot, \omega_A^\mydot))$ is the Matlis dual of $\myH^{-i}(\mathbf{R}\Gamma_\m(\omega_S^\mydot\otimes_A^{\mathbf{L}}k))=\myH^{-i}(\omega_S^\mydot\otimes_A^{\mathbf{L}}k)$.

The map $\Ext_A^i(k, S)\to H_\m^i(S)$ can be identified with the natural map $$\myH^{i}(\mathbf{R}\Hom_A(k, S))\to\myH^{i}(\varinjlim_{n}\mathbf{R}\Hom_A(A/\m^n, S))\cong \myH^{i}(\mathbf{R}\Gamma_\m S).$$ Hence by local duality this is the Matlis dual of $ \myH^{-i}(\omega_S^\mydot\otimes_A^{\mathbf{L}}k)  \leftarrow \myH^{-i}(\omega_S^\mydot).$
\end{proof}

\begin{lemma}
\label{lemma--injectivity on dualizing complex for large e}
Let $(A,\m, k)\twoheadrightarrow (R,\m,k)$ be a surjective homomorphism between $F$-finite Noetherian local rings  of characteristic $p>0$. Let $G^\mydot$ be a bounded complex of $R$-modules such that $\myH^{-i}G^\mydot$ has finite length. If $A$ is regular, then $\myH^{-i}(F^e_*G^\mydot)\to \myH^{-i}(F^e_*G^\mydot\otimes_A^{\mathbf{L}}k)$ is injective for $e\gg 0$.
\end{lemma}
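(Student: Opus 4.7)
The plan is to reduce the statement to a general claim about bounded complexes with $k$-vector space cohomology, and then to verify that claim by tensoring the short exact sequence $0 \to \m_A \to A \to k \to 0$ with $X := F^e_* G^\mydot$ and analyzing the resulting hyper-$\Tor$ spectral sequence.

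First I would observe that after sufficient Frobenius twist each cohomology of $F^e_* G^\mydot$ becomes a $k$-vector space. Since $G^\mydot$ is bounded and each $\myH^q(G^\mydot)$ has finite length over $R$, there is some $N$ with $\m_R^N \cdot \myH^q(G^\mydot) = 0$ for every $q$. Choose generators $x_1,\dots,x_n$ of $\m_A$, and recall that the $A$-action on $F^e_* M$ is given by $a \cdot m = a^{p^e} m$. Thus $\m_A \cdot F^e_* \myH^q(G^\mydot)$ is contained in the $R$-submodule $(x_1^{p^e},\dots,x_n^{p^e}) \cdot \myH^q(G^\mydot) \subseteq \m_R^{p^e} \cdot \myH^q(G^\mydot)$, which vanishes once $p^e \geq N$. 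For such $e$, setting $X := F^e_* G^\mydot$, every $\myH^q(X) = F^e_* \myH^q(G^\mydot)$ is a $k$-vector space.

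Next I would prove the following general claim: if $X \in D^b(A)$ satisfies $\m_A \cdot \myH^q(X) = 0$ for all $q$, then $\myH^{-i}(X) \to \myH^{-i}(X \otimes_A^{\mathbf{L}} k)$ is injective. Applying $X \otimes_A^{\mathbf{L}} -$ to $0 \to \m_A \to A \to k \to 0$ yields an exact triangle $X \otimes_A^{\mathbf{L}} \m_A \to X \to X \otimes_A^{\mathbf{L}} k$, and the long exact sequence reduces injectivity to showing that $\myH^{-i}(X \otimes_A^{\mathbf{L}} \m_A) \to \myH^{-i}(X)$ is zero. I would check this on the induced map of hyper-$\Tor$ spectral sequences $E_2^{p,q} = \Tor^A_{-p}(\myH^q(X), -) \Rightarrow \myH^{p+q}(X \otimes_A^{\mathbf{L}} -)$ coming from $\m_A \hookrightarrow A$: for $p = 0$, the image of $\myH^q(X) \otimes_A \m_A \to \myH^q(X)$ is $\m_A \cdot \myH^q(X) = 0$; for $p < 0$, the target $\Tor^A_{-p}(\myH^q(X), A)$ is zero since $A$ is free over itself. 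Hence the map of spectral sequences vanishes on every entry of the $E_2$-page, and therefore on $E_\infty$.

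The main subtlety will be passing from vanishing on $E_\infty$ to vanishing of the map on the cohomology groups themselves. This is where the boundedness of $G^\mydot$ is essential: it guarantees that the filtrations on $\myH^{-i}(X \otimes_A^{\mathbf{L}} \m_A)$ and $\myH^{-i}(X)$ are finite, so a short induction on filtration level promotes vanishing on the associated graded to vanishing of the map. I note that the regularity of $A$ does not seem to be needed for the argument above beyond Noetherianity and finite generation of $\m_A$; it is presumably exploited downstream, when this lemma is combined with \autoref{lemma--duality of the maps} to translate injectivity on dualizing complexes into surjectivity of the natural map from $\Ext$ into local cohomology.
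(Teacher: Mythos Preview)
Your general claim --- that any $X \in D^b(A)$ with $\m_A \cdot H^q(X) = 0$ for all $q$ has $H^{-i}(X) \hookrightarrow H^{-i}(X \otimes_A^{\myL} k)$ --- is false, and the error is precisely in the step ``vanishing on the associated graded promotes to vanishing of the map.'' A filtered map that is zero on $\gr$ need not be zero, even with finite filtrations. In your situation the filtration on the target $H^{-i}(X)$ is the trivial one-step filtration (since $\Tor^A_{-p}(H^q(X),A)=0$ for $p\neq 0$), so $\gr^p f = 0$ for $p\neq 0$ is automatic and carries no information; the only content is $\gr^0 f=0$, which forces $f$ to vanish on $F^0$ of the source but says nothing about the rest.

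Here is a concrete counterexample. Take $A=k[[x,y]]$ and let $X$ be a non-split extension of $k$ by $k[1]$ in $D^b(A)$, i.e.\ the object sitting in a triangle $k[1]\to X\to k \xrightarrow{\alpha} k[2]$ with $0\neq\alpha\in\Ext^2_A(k,k)\cong k$. Then $H^{-1}(X)=H^0(X)=k$ are killed by $\m_A$, yet the map $H^{-1}(X)\to H^{-1}(X\otimes_A^{\myL}k)$ is zero: tensoring the triangle with $k$ and chasing the long exact sequence, the boundary map $\Tor_2^A(k,k)\to \Tor_0^A(k,k)$ induced by $\alpha$ is an isomorphism, which kills the image of $H^{-1}(k[1]\otimes^{\myL}k)$ in $H^{-1}(X\otimes^{\myL}k)$. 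More conceptually, if your claim held then \autoref{lem:ComplexofVectorSpaces} would force every such $X$ to come from $D(k)$ and hence to split as $\bigoplus H^i(X)[-i]$; this would give $\Ext_A^{\geq 2}(k,k)=0$, which is absurd once $\dim A\geq 2$.

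The paper's argument is genuinely different and avoids this trap. It uses the projection formula to trade $F^e_*G^\mydot\otimes_A^{\myL}k$ for $G^\mydot\otimes_A^{\myL}A/\m^{[p^e]}$, takes a \emph{finite} free resolution $P^\mydot$ of $A/\m^{[p^e]}$ (this is where regularity of $A$ is used), and analyzes the resulting bounded double complex directly. The key extra ingredient is the Artin--Rees lemma: if $x\in\ker\alpha_i$ becomes a boundary in the total complex, one finds $x\in(\im\alpha_{i+1}+\m^{p^e}G^{-i})\cap\ker\alpha_i$, and Artin--Rees (for $e\gg 0$) lets one absorb the $\m^{p^e}$-term into $\im\alpha_{i+1}$. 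There is no analogue of this step in your spectral-sequence approach; in particular, the lemma really is a statement about sufficiently large $e$, not about any $e$ with $p^e\geq N$, and regularity of $A$ is used, contrary to your final remark.
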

\begin{proof}
By the projection formula, it is enough to show $\myH^{-i}(G^\mydot)\to \myH^{-i}(G^\mydot\otimes_A^{\myL}A/\m^{[p^e]})$ is injective for $e\gg 0$.
Set $\dim A=n$. We take a minimal free resolution of $A/\m^{[p^e]}$ over $A$: $$P^\mydot=0\to P^{-n}\to P^{-n+1}\to\cdots\to P^{-1}\xrightarrow{g} P^0(=A)\to 0,$$ where we use $P^{-i}$ to emphasize that these modules are in cohomology degree $-i$. The map $g$ is represented by a minimal generating set of $\m^{[p^e]}$, say $[y_1^{p^e},\dots,y_n^{p^e}]$. We consider the double complex $G^\mydot\otimes_A P^\mydot$:
\[\xymatrix{
{} & \vdots \ar[d] & \vdots \ar[d] & \vdots\ar[d] & {}\\
\cdots \ar[r] & G^{-i-1}\otimes P^{-1} \ar[r]\ar[d]^{\id\otimes g} & G^{-i}\otimes P^{-1} \ar[r]\ar[d]^{\id\otimes g}& G^{-i+1}\otimes P^{-1} \ar[r]\ar[d]^{\id\otimes g} & \cdots \\
\cdots \ar[r] & G^{-i-1} \ar[r]^{\alpha_{i+1}}  & G^{-i} \ar[r]^{\alpha_i} & G^{-i+1} \ar[r] & \cdots
}\]

We know that the map $\myH^{-i}(G^\mydot)\to \myH^{-i}(G^\mydot\otimes_A^{\myL}A/\m^{[p^e]})$ is induced by the natural map of the $-i$-th cohomology of the bottom row to the $-i$-th cohomology of the total complex of $G^\mydot\otimes_A P^\mydot$. Pick $x\in\ker\alpha_i$, if $(x,0,\dots,0)\in\oplus_s(G^{-i+s}\otimes P^{-s})$ is a boundary in the total complex, then we have:
\begin{eqnarray*}
\label{equation--induced map from the double complex}
x &\in & (\im\alpha_{i+1}+\im(\id\otimes g))\cap\ker{\alpha_i} \\
 &=& (\im\alpha_{i+1}+\m^{[p^e]}G^{-i})\cap\ker{\alpha_i} \\
 &\subseteq& (\im\alpha_{i+1}+\m^{p^e}G^{-i})\cap\ker{\alpha_i}
\end{eqnarray*}
Hence for $e\gg 0$, by Artin-Rees lemma, we have $x\in \im\alpha_{i+1}+\m^{p^e-l}\ker{\alpha_i}$ for some fixed $l>0$. Since $\myH^{-i}(G^\mydot)=\ker\alpha_i/\im\alpha_{i+1}$ has finite length by assumption, for $e\gg0$ we have $\m^{p^e-l}\ker{\alpha_i}\subseteq\im\alpha_{i+1}$. Thus for $e\gg0$, $x\in \im\alpha_{i+1}$, that is, $(x,0,\dots,0)\in\oplus_s(G^{-i+s}\otimes P^{-s})$ is a boundary in the total complex if and only if $x$ is already a boundary in $G^\mydot$. This proves $\myH^{-i}(G^\mydot)\to \myH^{-i}(G^\mydot\otimes_A^{\myL}A/\m^{[p^e]})$ is injective for $e\gg0$.
\end{proof}

The following lemma is crucial. The idea comes from \cite[Proposition 6.3.5]{HanesThesis}

\begin{lemma}
\label{lemma--depth of F^e_*R/R}
Let $(R,\m,k)$ be an $F$-finite Noetherian local ring of characteristic $p>0$ such that $H_\m^i(R)$ has finite length for every $0\leq i <j$. If $R$ is $F$-injective and $k$ is perfect, then $\depth((F^e_*R)/R)\geq j$ for every $e\geq 1$.
\end{lemma}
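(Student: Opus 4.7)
My approach would work directly with the short exact sequence of $R$-modules
\[
0 \to R \xrightarrow{F^e} F^e_* R \to F^e_* R/R \to 0
\]
and the associated long exact sequence of local cohomology. The key inputs are: (i) $F$-injectivity (iterated $e$ times, using exactness of $F_*$) implies $H_\m^i(R) \to H_\m^i(F^e_* R)$ is injective for every $i$ and every $e \geq 1$; and (ii) perfectness of $k$ implies $\length_R(F^e_* M) = \length_R(M)$ for any finite length $R$-module $M$. I would also use the natural identification $H_\m^i(F^e_* R) \cong F^e_* H_\m^i(R)$ as $R$-modules, which holds because the $\m$-torsion and the $\m^{[p^e]}$-torsion of $R$ coincide (since $\m^{[p^e]}$ has radical $\m$).

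With (i) in hand, every connecting map $\delta_i \colon H_\m^i(F^e_* R/R) \to H_\m^{i+1}(R)$ in the long exact sequence vanishes, since its image lies in the kernel of the injective next map. The long exact sequence therefore breaks into short exact sequences
\[
0 \to H_\m^i(R) \to F^e_* H_\m^i(R) \to H_\m^i(F^e_* R/R) \to 0 \qquad \text{for every } i \geq 0.
\]
For $i < j$, the hypothesis gives $\length_R H_\m^i(R) < \infty$, and then (ii) gives $\length_R F^e_* H_\m^i(R) = \length_R H_\m^i(R)$. Comparing lengths in the displayed sequence forces $H_\m^i(F^e_* R/R) = 0$, which is exactly $\depth_R(F^e_* R/R) \geq j$.

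The only substantive step is (ii). By exactness of $F^e_*$ and devissage on composition factors, it reduces to computing $\length_R F^e_* k$; one identifies $F^e_* k$ with $k^{1/p^e}$ as a $k$-vector space via the twisted action $\lambda \cdot x = \lambda^{p^e} x$, so $\length_R F^e_* k = [k^{1/p^e}:k]$, which equals $1$ precisely when $k$ is perfect. This is where perfectness is used essentially (not merely as a convenience), and for non-perfect $k$ one recovers only that $H_\m^i(F^e_* R/R)$ has finite length; this explains why \autoref{theorem--main theorem on F-injective} must be proved by the different, more delicate route given earlier in this section.
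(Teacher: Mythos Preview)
Your argument is essentially the same as the paper's: both take the long exact sequence of local cohomology for $0 \to R \to F^e_*R \to (F^e_*R)/R \to 0$, use $F$-injectivity to get injectivity of $H^i_\m(R)\to H^i_\m(F^e_*R)$, and then compare lengths via $k$ perfect to force $H^i_\m((F^e_*R)/R)=0$ for $i<j$. The only point you leave implicit that the paper makes explicit is that $R$ is reduced (a consequence of $F$-injectivity), which is needed for the map $R\to F^e_*R$ to be injective and hence for your displayed sequence to be short exact on the left.
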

\begin{proof}
First of all, $R$ is $F$-finite and $F$-injective, which implies $R$ is reduced, for example see \cite{SchwedeandZhangBertinitheoremsforFsingularities}. Now the short exact sequence: $$0\to R\to F_*^eR\to (F_*^eR)/R\to 0$$ induces a long exact sequence on local cohomology:
$$\cdots \to H_\m^i(R)\xrightarrow{\phi_i} H_\m^i(F_*^eR)\to H_\m^i((F_*^eR)/R)\to \cdots$$
Since $k$ is perfect, we have $l_R(H_\m^i(R))=l_{F^e_*R}(H_\m^i(F_*^eR))=l_R(H_\m^i(F_*^eR))$. Since $R$ is $F$-injective, for every $0\leq i<j$, the map $\phi_i$ is an injective map between $R$-modules of the same length, and hence bijective. Chasing the long exact sequence we then know that $H_\m^i((F_*^eR)/R)=0$ for every $0\leq i<j$, which proves that $\depth(F_*^eR)/R\geq j$.
\end{proof}

Now we give an alternative proof of \autoref{theorem--main theorem on F-injective} when $R$ is $F$-injective and $k$ is perfect.

\begin{theorem}
Let $(R,\m,k)$ be a Noetherian local ring of characteristic $p>0$ such that $H_\m^i(R)$ has finite length for every $0\leq i <j$. If $R$ is $F$-injective with $k$ perfect, then $\tau_{>-j}\omega_R^\mydot$ is quasi-isomorphic to a complex of $k$-vector spaces.
\end{theorem}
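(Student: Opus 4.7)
The plan is to construct, for sufficiently large $e$, a map from an object in the essential image of $D(k) \to D(R)$ into $\tau^{<j}\myR\Gamma_\m R$ which is surjective on cohomology, apply \autoref{lemma--complex of vector spaces}, and then pass to the dual side via Matlis duality to get the conclusion about $\tau_{>-j}\omega_R^\mydot$.

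First I would choose a presentation $R = A/I$ with $(A, \m_A, k)$ an $F$-finite regular local ring (possible because $F$-finite rings admit such presentations, e.g.\ by Gabber). The candidate source is
\[
K^\mydot \;:=\; \tau^{<j}\,\mathbf{R}\Hom_A(k, F^e_*R)
\]
for a sufficiently large $e$ to be fixed later. Since $\mathbf{R}\Hom_A(k,-)$ naturally factors through the forgetful functor $D(k) \to D(A)$ (the cohomologies $\Ext_A^i(k,-)$ are annihilated by $\m_A$, and an injective $A$-resolution computes them via a complex of $k$-vector spaces), $K^\mydot$ is indeed quasi-isomorphic to a complex of $k$-vector spaces. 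I would then form the composite
\[
K^\mydot = \tau^{<j}\,\mathbf{R}\Hom_A(k, F^e_*R) \;\longrightarrow\; \tau^{<j}\,\myR\Gamma_\m(F^e_*R) \;\xleftarrow{\;\sim\;}\; \tau^{<j}\,\myR\Gamma_\m R,
\]
where the first arrow is the canonical map from derived $\m_A$-torsion-computing functors (so on $H^i$ it is exactly the map appearing in \autoref{lemma--duality of the maps}), and the second arrow is induced by the Frobenius $R \to F^e_*R$. The second arrow is a quasi-isomorphism for every $e \geq 1$: by \autoref{lemma--depth of F^e_*R/R} (which is where the $F$-injective and $k$ perfect hypotheses enter), $\depth\bigl((F^e_*R)/R\bigr) \geq j$, and feeding this into the long exact sequence on local cohomology coming from $0 \to R \to F^e_*R \to (F^e_*R)/R \to 0$ gives $H_\m^i(R) \xrightarrow{\sim} H_\m^i(F^e_*R)$ for $i < j$.

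To check that the composite is surjective on $H^i$ for all $i$, it suffices (by the truncation) to check surjectivity on $H^i$ for $i < j$. I would apply \autoref{lemma--injectivity on dualizing complex for large e} with $G^\mydot = \omega_R^\mydot$, noting that $\myH^{-i}(\omega_R^\mydot)$ is the Matlis dual of $H_\m^i(R)$ and hence has finite length for $i < j$; this produces, for $e \gg 0$, an injection $\myH^{-i}(F^e_*\omega_R^\mydot) \hookrightarrow \myH^{-i}(F^e_*\omega_R^\mydot \otimes_A^{\mathbf{L}} k)$ for all $i < j$. Dualizing this via \autoref{lemma--duality of the maps} applied to the module-finite map $A \to F^e_*R$ (module-finiteness uses that $R$ is $F$-finite) yields a surjection $\Ext_A^i(k, F^e_*R) \twoheadrightarrow H_\m^i(F^e_*R)$ for $i < j$. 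Composing with the inverse of the isomorphism $H_\m^i(R) \cong H_\m^i(F^e_*R)$ from the previous paragraph then gives the required surjectivity of $H^i(K^\mydot) \to H^i(\tau^{<j}\myR\Gamma_\m R)$.

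With this, \autoref{lemma--complex of vector spaces} concludes that $\tau^{<j}\myR\Gamma_\m R$ is quasi-isomorphic to a complex of $k$-vector spaces, and Matlis duality (together with the fact that truncation is compatible with Matlis duality in the relevant range) transports this statement to $\tau_{>-j}\omega_R^\mydot$. The main obstacle is the bookkeeping involved in matching up the injectivity from \autoref{lemma--injectivity on dualizing complex for large e} (which naturally lives over $F^e_*R$) with the target $\myR\Gamma_\m R$; this is exactly where \autoref{lemma--depth of F^e_*R/R} is indispensable, and it is only there that one uses $F$-injectivity and perfectness of $k$.
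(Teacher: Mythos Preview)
Your argument is essentially the paper's own alternative proof, repackaged slightly. The paper also applies \autoref{lemma--injectivity on dualizing complex for large e} to $\omega_R^\mydot$, dualizes via \autoref{lemma--duality of the maps} to get surjectivity of $\Ext_A^i(k,F^e_*R)\to H_\m^i(F^e_*R)$, and uses \autoref{lemma--depth of F^e_*R/R}. The only difference is cosmetic: the paper uses the depth lemma in a diagram chase on $\Ext$'s to transport surjectivity back to $\Ext_A^i(k,R)\to H_\m^i(R)$ and then invokes \autoref{lem:ComplexofVectorSpaces} on the dual side, whereas you use the depth lemma to identify $\tau^{<j}\myR\Gamma_\m R\simeq\tau^{<j}\myR\Gamma_\m(F^e_*R)$ and then invoke \autoref{lemma--complex of vector spaces} directly.

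One genuine omission: the theorem does not assume $R$ is $F$-finite, only that $k$ is perfect, so your appeal to Gabber for a regular $F$-finite presentation, and your use of \autoref{lemma--depth of F^e_*R/R} (which requires $F$-finiteness), are not yet justified. The paper handles this by first observing that the conclusion is unaffected by completion and replacing $R$ by $\widehat{R}$; a complete local ring with perfect residue field is $F$-finite, and Cohen's structure theorem then supplies the regular surjection. You should insert this reduction at the start. A smaller point: \autoref{lemma--complex of vector spaces} is stated for an honest chain map, while your composite lives in $D(R)$; it is cleanest either to apply \autoref{lemma--complex of vector spaces} to the honest map $K^\mydot\to\tau^{<j}\myR\Gamma_\m(F^e_*R)$ and then transport along the quasi-isomorphism, or to dualize and quote \autoref{lem:ComplexofVectorSpaces} as the paper does.
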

\begin{proof}
Since whether $\tau_{>-j}\omega_R^\mydot$ or $\tau^{<j}\myR\Gamma_\m R$ is quasi-isomorphic to a complex of $k$-vector spaces is unaffected under completion, we may complete $R$ and assume $(R,\m,k)$ is a complete local ring with $k$ perfect. In particular, $R$ is $F$-finite and by Cohen's structure theorem we can fix $(A,\m, k)\twoheadrightarrow (R,\m, k)$ with $A$ a regular local ring. By \autoref{lemma--injectivity on dualizing complex for large e} applied to $G^\mydot=\omega_R^\mydot$, we can pick $e\gg 0$ such that $\myH^{-i}(F^e_*\omega_R^\mydot)\to \myH^{-i}(F^e_*\omega_R^\mydot\otimes_A^{\mathbf{L}}k)$ is injective for every $0\leq i<j$ (note that $\myH^{-i}\omega_R^\mydot$ has finite length because its dual $H_\m^i(R)$ has finite length for every $0\leq i <j$). Now by \autoref{lemma--duality of the maps} applied to $S=F^e_*R$, we have $\Ext_A^i(k, F_*^eR)\to H_\m^i(F_*^eR)$ is surjective for every $0\leq i<j$.

By \autoref{lemma--depth of F^e_*R/R} we know that $\depth_A(F_*^eR)/R\geq j$. Now the short exact sequence $$0\to R\to F_*^eR\to (F_*^eR)/R\to 0$$ induces the following commutative diagram:
\[\xymatrix{
0=\Ext_A^{i-1}(k, (F_*^eR)/R) \ar[r] & \Ext_A^i(k, R) \ar[r] \ar[d] & \Ext_A^i(k, F_*^eR) \ar[r]\ar@{->>}[d] & \Ext_A^i(k, (F_*^eR)/R)=0 \\
0=H_\m^{i-1}((F_*^eR)/R)\ar[r] & H_\m^i(R) \ar[r] & H_\m^i(F_*^eR) \ar[r] & H_\m^i((F_*^eR)/R)=0
}\]
A diagram chase shows that the surjectivity of $\Ext_A^i(k, F_*^eR)\to H_\m^i(F_*^eR)$ implies the surjectivity of $\Ext_A^i(k, R)\to H_\m^i(R)$. Thus \autoref{lemma--duality of the maps} implies that $\myH^{-i}(\omega_R^\mydot)\to \myH^{-i}(\omega_R^\mydot\otimes^\myL_A k)$ is injective and thus so is $\myH^{-i}(\tau_{>-j}\omega_R^\mydot)\to \myH^{-i}(\tau_{>-j}\omega_R^\mydot\otimes^\myL_A k)$. Now \autoref{lem:ComplexofVectorSpaces} shows that $\tau_{>-j}\omega_R^\mydot$ is quasi-isomorphic to a complex of $k$-vector spaces.
\end{proof}

\begin{remark}
In the previous version of this paper (available on the arXiv) we gave a proof of \autoref{theorem--main theorem on F-injective tight closure} again in the case where the residue field is perfect.  However, the proof was rather involved and so we omit it here.
\end{remark}

\section{Du~Bois singularities}

We now prove our main results for Du~Bois singularities.  Suppose $X$ is a reduced scheme essentially of finite type over a field $k$ of characteristic zero and $\pi : W \to X$ is a resolution of singularities.  Then by for instance \cite[Theorem 4.2.2]{KovacsSchwedeDuBoisSurvey}, there exists a natural map $\DuBois{X} \to \myR \pi_* \DuBois{W}$.  Since $W$ is smooth, and hence has Du~Bois singularities, $\DuBois{W} \qis \O_W$ and hence there is a natural map
\begin{equation}
\label{eqn.DuBoisToRes}
\DuBois{X} \to \myR \pi_* \O_W.
\end{equation}

\begin{theorem}
\label{thm.MainTheoremOnDuBois}
Suppose that $(R, \bm, k)$ is a reduced local ring essentially of finite type over a field of characteristic zero.  Let $\kappa : W \to X = \Spec R$ be a resolution of singularities.  Suppose that
\[
\myR \kappa_* \omega_W^{\mydot} \to \myR \sHom_X(\DuBois{X}, \omega_{X}^{\mydot}) \to C \xrightarrow{+1}
\]
is a triangle where the first map is the Grothendieck dual of \autoref{eqn.DuBoisToRes}.  Suppose that $\myH^{-i} C$ has finite length for all $0 \leq i < j$ for some $j$ (equivalently, it is supported within $V(\m)$ for those same $i$).  Then $\tau_{> -j} C$ is quasi-isomorphic to a complex of $k$-vector spaces and in particular, $\tau_{> -j} C \qis \oplus_{0 \leq i < j} \myH^{-i}(C)[i]$ is a direct sum of its cohomologies.
\end{theorem}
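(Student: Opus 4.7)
The plan is to apply \autoref{lem:ComplexofVectorSpaces} with $K = \tau_{>-j} C$, mirroring the strategy behind \autoref{theorem--main theorem on F-injective}. For this I need to produce a map from $K$ to an object of $D(k)$ that is injective on cohomology in the range $0 \leq i < j$.

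First I would reduce to the case where $R$ is complete, which is legitimate because the hypotheses and conclusion only involve cohomology of finite length, and the formation of $\DuBois{X}$, the resolution $\kappa : W \to X$, and the cone $C$ is compatible with the flat base change $R \to \hat{R}$. By Cohen's structure theorem, choose a surjection $A \twoheadrightarrow R$ from a regular local ring $A = k[[t_1, \ldots, t_n]]$. The natural candidate target is $C \otimes_A^\myL k$, which lies in $D(k)$, with the canonical map $C \to C \otimes_A^\myL k$ as the test map.

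The heart of the argument is to verify that $\myH^{-i}(C) \to \myH^{-i}(C \otimes_A^\myL k)$ is injective for $0 \leq i < j$. Two observations guide this. First, Grauert--Riemenschneider vanishing (valid in characteristic zero) gives $\myR\kappa_*\omega_W^\mydot \simeq (\kappa_* \omega_W)[\dim X]$, a single module placed in cohomological degree $-\dim X$; so the long exact sequence attached to the defining triangle
\[
\myR\kappa_*\omega_W^\mydot \to \myR\sHom(\DuBois{X}, \omega_X^\mydot) \to C \xrightarrow{+1}
\]
yields $\myH^{-i}(C) \simeq \myH^{-i}(\myR\sHom(\DuBois{X}, \omega_X^\mydot))$ for $0 \leq i < \dim X$, reducing the problem to the Grothendieck dual of the Du Bois complex. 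Second, $\DuBois{X}$ is built from a (simplicial or log) resolution of $X$ by smooth schemes, and carries Hodge-theoretic structure---notably Kov\'acs-type direct-summand splittings of $\DuBois{X}$ inside $\myR\kappa_*\O_W$ and its dual incarnations---which controls how $\myR\sHom(\DuBois{X}, \omega_X^\mydot)$ behaves under derived tensor with $k$ over the regular ring $A$, yielding the required injectivity. Once the injectivity is in place, \autoref{lem:ComplexofVectorSpaces} gives $\tau_{>-j} C \in D(k)$, and the direct-sum decomposition is automatic because every object of $D(k)$ splits canonically into its cohomology groups.

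The main obstacle is precisely the injectivity step. In the F-injective proof, Frobenius iteration, together with an Artin--Rees argument (cf. \autoref{lemma--injectivity on dualizing complex for large e}) and a depth computation (\autoref{lemma--depth of F^e_*R/R}), delivered the needed injectivity by exploiting $\m^{[p^e]} \subseteq \m^{p^e}$ for $e \gg 0$. In characteristic zero there is no such Frobenius amplification, and the replacement input must come from the intrinsic Hodge-theoretic nature of $\DuBois{X}$: it is this built-in splitting behaviour arising from resolutions by smooth varieties that plays the role of Frobenius here.
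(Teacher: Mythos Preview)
Your proposal has a genuine gap: the injectivity of
\[
\myH^{-i}(C) \longrightarrow \myH^{-i}(C \otimes_A^{\myL} k)
\]
for $0 \le i < j$ is never established. You yourself identify this as ``the main obstacle'' and then appeal to unspecified ``Kov\'acs--type direct-summand splittings'' and ``Hodge-theoretic nature of $\DuBois{X}$'' without saying which splitting, applied to which map, yields injectivity on cohomology after derived base change to $k$. The known splittings (e.g.\ $\O_X \to \DuBois{X} \to \myR\pi_*\O_W$ composing to a split map when $X$ is Du~Bois) say nothing directly about the behavior of $\myR\sHom(\DuBois{X},\omega_X^{\mydot}) \to \myR\sHom(\DuBois{X},\omega_X^{\mydot}) \otimes_A^{\myL} k$, and there is no characteristic-zero analog of the Frobenius/Artin--Rees argument in \autoref{lemma--injectivity on dualizing complex for large e} to fall back on. A secondary issue: your reduction to the complete case is delicate, since $\DuBois{X}$ is defined only for schemes essentially of finite type over a field, and $\widehat{R}$ need not be of this type; the paper never completes.

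The paper's proof is entirely different in spirit and does not use \autoref{lem:ComplexofVectorSpaces} at all. It is geometric: one realizes $\myR\sHom_X(\DuBois{X},\omega_X^{\mydot})$ as $\myR\pi_*\omega_{\overline{X}}^{\mydot}$ for a carefully chosen log resolution $\pi:\tld Y \to Y \supseteq X$ with $\overline{X} = \pi^{-1}(X)_{\red}$, decomposes the SNC divisor $\overline{X} = \tld X + E + F$ with $E = \pi^{-1}(V(\m))_{\red}$, and uses short exact sequences of ideal sheaves to identify $C$ with $\myR\pi_*\omega_{E\cup F}^{\mydot}(\tld X)$. A further triangle separates the contributions of $E$ and $F$; the key input is then a vanishing result from the theory of slc pairs (Ambro/Fujino: associated primes of $\myH^{-i}\myR\pi_*\omega_F^{\mydot}(\tld X)$ are images of strata of $(F,\tld X|_F)$) combined with the freedom to blow up so that no stratum of $\tld X + F$ lies over $\m$. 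This forces $\tau_{>-j}\myR\pi_*\omega_F^{\mydot}(\tld X) \qis 0$, whence $\tau_{>-j} C \qis \tau_{>-j}\myR\pi_*\omega_E^{\mydot}(\tld X + F)$, which is \emph{literally} a complex of $k$-vector spaces because $\pi(E) = V(\m)$. No injectivity-into-$D(k)$ argument is needed.
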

\begin{proof}
Let $(A,\m)\to(R,\m)$ be a surjection with $A$ regular. Let $X=\Spec R$ and $Y=\Spec A$. We take a log resolution $\pi : \tld Y \to Y$ of $(Y, X)$ obtained in the following way.  First form an embedded resolution of $X$ in $Y$, $\kappa : Y' \to Y$, in such a way that the exceptional divisor is SNC and intersects the strict transform of $X$ with normal crossings in a SNC divisor.  Then blow up the strict transform $W$ of $X$ in $Y'$, see for instance \cite{BravoEncinasVillmayorSimplified}, obtaining a log resolution
\[
\pi : \tld Y \xrightarrow{\rho} Y' \xrightarrow{\kappa} Y
\]
 of $(Y, X)$ and also of $(Y', W)$.  Note $\kappa = \kappa|_W: W \to X$ is a resolution of singularities.  We observe also that $\tld X = \rho^{-1} W$ is the union of components of $\overline{X} = \pi^{-1}(X)_{\red}$ whose $\pi$-image dominates a component of $X$.   Write $\overline{X} = \tld X + E + F$ where $E = \pi^{-1}(V(\m))_{\reduced}$.
\begin{claim}With notation as above $\myR \pi_* \omega_{\tld X}^{\mydot} \qis \myR \kappa_* \omega_{W}^{\mydot}$.  \end{claim}
\begin{proof}[Proof of claim]
There is a map $\rho : \tld X \to W$ by construction.  It suffices to show that $\myR \rho_* \omega_{\tld X}^{\mydot} \qis \omega_{W}^{\mydot}$.  But $W$ is smooth and hence Du~Bois and recall that $(\tld Y, \tld X = (\rho^{-1} W)_{\reduced})$ is a log resolution of $(Y', W)$.  Therefore by applying the criterion from \autoref{def.DuBoisSings}, we see that $\myR \rho_* \O_{\tld X} \qis \O_W$.  Applying Grothendieck duality proves the claim.
\end{proof}
We next note that $\myR \pi_* \omega_{\overline{X}}^{\mydot} \qis \myR \sHom_X(\DuBois{X}, \omega_{X}^{\mydot})$.  Further observe that the map
\[
\myR \pi_* \omega_W^{\mydot} \to \myR \sHom_X(\DuBois{X}, \omega_{X}^{\mydot})
\]
is induced by the inclusion $\tld{X} \hookrightarrow \overline{X}$.    By blowing up further (even at the $\kappa$-stage) if needed, we may assume that no stratum of $\tld{X} + F$ lies over $\m$.  Indeed, if there are any such strata, simply blow them up so that their inverse images become parts of $E$.

With this notation, we observe that we have the short exact sequence:
\[
0 \to \O_{E \cup F}(-\tld{X}) \to \O_{\overline{X}} \to \O_{\tld{X}} \to 0
\]
and so by dualizing and pushing forward, we see that
\[
C \qis \myR \pi_* \omega_{E \cup F}^{\mydot}(\tld X).
\]
From the short exact sequence
\[
0 \to \O_{E}(-\tld X - F) \to \O_{E \cup F}(-\tld X) \to \O_{F}(-\tld X) \to 0
\]
we obtain
\[
\myR \pi_* \omega_F^{\mydot}(\tld X) \to C \to \myR \pi_* \omega_{E}^{\mydot}(\tld X + F) \xrightarrow{+1}.
\]
By our initial definition of $C$, we see that $\Supp \myH^{-i} C \subseteq V(\m)$ for every $i < j$.  Since
\[
\Supp \myH^{-i} \myR \pi_* \omega_{E}^{\mydot}(\tld X + F) \subseteq V(\m)
\]
for every $i$ (since $\pi(E) = V(\m)$), we also conclude that
\[
\Supp \myH^{-i}\myR \pi_* \omega_F^{\mydot}(\tld X) \subseteq V(\m)
\]
for every $i < j$.  However, observe that $(F, \tld X|_F)$ is a semi-log canonical pair (since it is SNC), and hence that any associated prime of $\myH^{-i} \myR \pi_* \omega_F(\tld X)$ is the image of a (slc-)strata of $(F, \tld X|_F)$ by \cite[Theorem 3.2]{AmbroQuasiLogVarieties} or \cite[Theorem 1.12]{FujinoFundamentaltheoremforSLC}.  But $V(\m)$ is not any such image by our construction. Hence we see that $\myH^{-i}\myR \pi_* \omega_F^{\mydot}(\tld X) = 0$ for all $i < j$.  Thus $\tau_{> -j} \myR \pi_* \omega_F^{\mydot}(\tld X) \qis 0$.  It follows that
\[
\tau_{> -j} C \qis \tau_{> -j} \myR \pi_* \omega_{E}^{\mydot}(\tld X + F).
\]
But since $\pi(E) = V(\m)$, we see that $\m$ annihilates the terms of $\myR \pi_* \omega_{E}^{\mydot}(\tld X + F)$.  In other words, $\myR \pi_* \omega_{E}^{\mydot}(\tld X + F)$ is a complex of $k$-vector spaces.  Thus $\tau_{>-j} C$ is quasi-isomorphic to a complex of $k$-vector spaces as claimed.
\end{proof}

While technical, this theorem immediately implies results analogous to those we proved previously for $F$-injective singularities.

\begin{corollary}
\label{corollary--main corollary on DB}
Suppose that $(R, \m, k)$ is a reduced local ring essentially of finite type over a field of characteristic zero.  Suppose that $X = \Spec R$ has Du Bois singularities and that $H^i_{\m}(R)$ has finite length for all $i < j \leq \dim R$.   Then $\tau^{< j} \myR \Gamma_{\m}(R)$ is quasi-isomorphic to a complex of $k$-vector spaces or dually, $\tau_{> -j} \omega_X^{\mydot}$ is quasi-isomorphic to a complex of $k$-vector spaces.

In particular, Du~Bois singularities with isolated non-Cohen-Macaulay locus are Buchsbaum.
\end{corollary}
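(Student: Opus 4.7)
The plan is to deduce this corollary directly from \autoref{thm.MainTheoremOnDuBois} by using the Du~Bois hypothesis to simplify the triangle and then applying Grauert--Riemenschneider vanishing to identify $C$ with $\omega_X^{\mydot}$ in the relevant cohomological range.

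First, since $X$ is Du~Bois, the canonical map $\O_X \to \DuBois{X}$ is a quasi-isomorphism, so $\myR \sHom_X(\DuBois{X}, \omega_X^{\mydot}) \qis \omega_X^{\mydot}$, and the triangle in \autoref{thm.MainTheoremOnDuBois} reduces to
\[
\myR \kappa_* \omega_W^{\mydot} \to \omega_X^{\mydot} \to C \xrightarrow{+1}.
\]
By Grauert--Riemenschneider vanishing (we are in characteristic zero and $W$ is smooth), $R^l \kappa_* \omega_W = 0$ for $l > 0$, so $\myR \kappa_* \omega_W^{\mydot}$ is concentrated in cohomological degree $-d$, where $d = \dim R$. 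The long exact cohomology sequence of the triangle then shows that $\omega_X^{\mydot} \to C$ induces an isomorphism on $\myH^{-i}$ for every $i < d$.

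Next, by local duality, $\myH^{-i}(\omega_X^{\mydot})$ is Matlis dual to $H^i_\m(R)$, which has finite length for $i < j$ by hypothesis. Since $j \leq d$, the identification $\myH^{-i}(C) \cong \myH^{-i}(\omega_X^{\mydot})$ for $i < j$ shows that $\myH^{-i}(C)$ has finite length for every $i < j$, verifying the hypothesis of \autoref{thm.MainTheoremOnDuBois}. That theorem yields that $\tau_{>-j} C$ is quasi-isomorphic to a complex of $k$-vector spaces. Because the map $\omega_X^{\mydot} \to C$ is a quasi-isomorphism on cohomology in degrees strictly greater than $-d \geq -j$, the induced map $\tau_{>-j} \omega_X^{\mydot} \to \tau_{>-j} C$ is itself a quasi-isomorphism, and this proves the first assertion. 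Matlis dualizing (together with \autoref{lemma--complex of vector spaces}) yields the equivalent statement for $\tau^{<j} \myR \Gamma_\m(R)$.

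Finally, for the Buchsbaum consequence, specialize to $j = d$: the isolated non-Cohen-Macaulay hypothesis is exactly that $H^i_\m(R)$ has finite length for $i < d$, and Schenzel's criterion (\cite{SchenzelApplicationsOfDualizingComplexes}) identifies the resulting conclusion on $\tau_{>-d}\omega_X^{\mydot}$ with Buchsbaumness. I do not anticipate any real obstacle: the only point requiring any attention is the passage from the triangle to the truncated quasi-isomorphism, and this is routine since the standard truncations preserve quasi-isomorphisms and depend only on cohomology in the preserved range.
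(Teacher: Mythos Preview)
Your proof is correct and follows essentially the same approach as the paper: reduce via the Du~Bois hypothesis to $\omega_X^{\mydot}$, observe that $\tau_{>-j}\omega_X^{\mydot}\qis\tau_{>-j}C$, and invoke \autoref{thm.MainTheoremOnDuBois}. The only difference is that you explicitly spell out the Grauert--Riemenschneider vanishing and the verification of the finite-length hypothesis of \autoref{thm.MainTheoremOnDuBois}, whereas the paper packages both into the single remark ``$\tau_{>-j} C \qis \tau_{>-j} \omega_X^{\mydot}$ since $j \leq \dim R$.''
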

\begin{proof}
Because $X$ has Du~Bois singularities, $\omega_X^{\mydot} \qis \myR \sHom_X(\DuBois{X}, \omega_{X}^{\mydot})$.  We then notice that $\tau_{> -j} C \qis \tau_{> -j} \omega_X^{\mydot}$ where $C$ is defined as in \autoref{thm.MainTheoremOnDuBois}, since $j \leq \dim R$.  The result immediately follows.
\end{proof}

Since the parameter test submodule $\tau({\omega_R})$ is well-known to be the characteristic $p>0$ analogue of the Grauert-Riemenschneider canonical sheaf $\pi_*\omega_W=\omega_R^{GR}$. The following result is an analogue of \autoref{theorem--main theorem on F-injective tight closure}.

\begin{corollary}
\label{corollary--main corollary on DB punctured rational}
Suppose that $(R, \m, k)$ is a reduced and equidimensional local ring essentially of finite type over a field of characteristic zero.  Suppose that $X = \Spec R$ has Du~Bois singularities and that $X \setminus V(\m)$ has rational singularities.  Let $\pi : W \to X$ is a resolution of singularities and consider the triangle:
\[
\pi_* \omega_W[d] \to \omega_X^{\mydot} \to C \xrightarrow{+1}.
\]
Then $C$ is quasi-isomorphic to a complex of $k$-vector spaces and so is a direct sum of its cohomologies.
\end{corollary}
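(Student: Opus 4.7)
The plan is to deduce this directly from \autoref{thm.MainTheoremOnDuBois}; the work amounts to identifying the triangle in the corollary with the triangle of that theorem and then verifying its finite length hypothesis.

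For the identification, I would first use that $X$ is Du~Bois to rewrite $\myR \sHom_X(\DuBois{X}, \omega_X^{\mydot}) \qis \omega_X^{\mydot}$ (since $\DuBois{X} \qis \O_X$). Next, since $W$ is smooth of dimension $d$ we have $\omega_W^{\mydot} \qis \omega_W[d]$, and combined with Grauert--Riemenschneider vanishing in characteristic zero this gives $\myR \pi_* \omega_W^{\mydot} \qis \pi_* \omega_W[d]$. Under these identifications, the Grothendieck dual of $\O_X \qis \DuBois{X} \to \myR \pi_* \O_W$ (which is the left map in the triangle of \autoref{thm.MainTheoremOnDuBois}) becomes exactly the natural trace map $\pi_* \omega_W[d] \to \omega_X^{\mydot}$ appearing in the corollary, so the two cones agree.

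For the finite length hypothesis: the rational singularities assumption on the punctured spectrum implies $\O \to \myR \pi_* \O_W$ is a quasi-isomorphism on $X \setminus V(\m)$, and dualizing shows $\pi_* \omega_W[d] \to \omega_X^{\mydot}$ is a quasi-isomorphism there. Hence $C$ is set-theoretically supported on $V(\m)$; since $C$ is a bounded complex with coherent cohomology (both input complexes are coherent, using properness of $\pi$), each $\myH^{-i}(C)$ is a coherent $R$-module supported at the maximal ideal, and thus of finite length. As $\pi_* \omega_W[d]$ sits in cohomological degree $-d$ and $\omega_X^{\mydot}$ in degrees $[-d, 0]$, the complex $C$ is concentrated in degrees $[-d-1, 0]$, so any choice of $j \ge d+2$ gives $\tau_{> -j} C \qis C$.

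Applying \autoref{thm.MainTheoremOnDuBois} with such $j$ then yields that $C$ is quasi-isomorphic to a complex of $k$-vector spaces, and in particular splits into a direct sum of its cohomologies. The one non-routine ingredient is the identification of maps in the two triangles, which comes from Grothendieck duality applied to the proper morphism $\pi$; past this, the main theorem does all of the real work, so I do not expect a substantial obstacle.
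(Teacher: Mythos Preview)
Your proposal is correct and follows essentially the same route as the paper: identify $\myR\sHom_X(\DuBois{X},\omega_X^{\mydot})\qis\omega_X^{\mydot}$ via the Du~Bois hypothesis, identify $\myR\pi_*\omega_W^{\mydot}\qis\pi_*\omega_W[d]$ via Grauert--Riemenschneider, use punctured rationality to get finite length cohomology of $C$, and then apply \autoref{thm.MainTheoremOnDuBois} with $j$ large enough that $\tau_{>-j}C\qis C$. The paper takes $j=d+1$ (using that $\pi_*\omega_W\hookrightarrow\omega_X$, so $C$ has cohomology only in degrees $[-d,0]$), while you take $j\geq d+2$; this is an inconsequential difference.
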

\begin{proof}
We first observe that $\pi_* \omega_W[d] \qis \myR \pi_* \omega_W^{\mydot}$ by the Grauert-Riemenschneider vanishing theorem because $W$ is smooth.  Since $X \setminus V(\m)$ has rational singularities, we see that $\myH^{-i} C$ is supported within $V(\m)$ for all $i < j=\dim X+1$.  Finally since $X$ has Du~Bois singularities, $\O_X \qis \DuBois{X}$, and we see that $\omega_X^{\mydot} \qis \myR \sHom_X(\DuBois{X}, \omega_{X}^{\mydot})$.  Putting this all together, we see that $C$ is the $C$ as in \autoref{thm.MainTheoremOnDuBois} and that all the conditions of \autoref{thm.MainTheoremOnDuBois} are satisfied with $j=\dim X+1$ (and hence $C=\tau_{>-j} C$). The conclusion follows.
\end{proof}

\section{Further questions}

Buchsbaum singularities have the property that their truncated $(> -\dim)$ dualizing complexes are the direct sum of their cohomologies.  One might ask if the full dualizing complex splits into a direct sum of all its cohomologies, especially in view of results such as \cite{KollarHigherdirectimagesofdualizingsheaves1} or \cite[Theorem 3.2]{KollarKovacsLCImpliesDB}.  However, without the truncation, this is not true.  The only way a dualizing complex can be a direct sum of its cohomologies is if the ring is Cohen-Macaulay. In fact, there is a more general statement concerning the indecomposability of local cohomology complexes:

\begin{proposition}
\label{proposition--local cohomology complex are indecomposable}
Let $R$ be a Noetherian ring, and $I \subset R$ an ideal such that $\mathrm{Spec}(R/I)$ is connected. Then $\myR\Gamma_I(R)$ is indecomposable in $D(R)$, i.e., it admits no non-trivial direct sum decomposition.
\end{proposition}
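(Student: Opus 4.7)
The plan is to compute the endomorphism ring of $\myR\Gamma_I(R)$ in $D(R)$ and show it admits no nontrivial idempotents; the statement then follows because any nontrivial direct sum decomposition $\myR\Gamma_I(R) \simeq A \oplus B$ produces a corresponding projection idempotent in the endomorphism ring.

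First I would compute endomorphisms via adjunction. The functor $\myR\Gamma_I\colon D(R)\to D_I(R)$ is right adjoint to the inclusion $D_I(R)\hookrightarrow D(R)$, with counit given by the canonical map $\myR\Gamma_I(R)\to R$. Since $\myR\Gamma_I(R)\in D_I(R)$, this adjunction yields a natural ring isomorphism
\[
\mathrm{End}_{D(R)}\bigl(\myR\Gamma_I R\bigr) \;\cong\; \mathrm{Hom}_{D(R)}\bigl(\myR\Gamma_I R,\,R\bigr),
\]
where composition on the left matches composition with the counit on the right. Next I would identify this Hom group with the $I$-adic completion of $R$. By Greenlees--May duality, $\myR\mathrm{Hom}_R(\myR\Gamma_I R,\,R)$ is naturally isomorphic to the derived $I$-adic completion $L\Lambda_I(R)$, whose $H^0$, under the Noetherian hypothesis, is the classical completion $\widehat{R}_I$. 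Thus $\mathrm{End}_{D(R)}(\myR\Gamma_I R) \cong \widehat{R}_I$ as rings. (Alternatively, one can avoid invoking Greenlees--May and carry out the computation directly on the stable Koszul/\Cech model for $\myR\Gamma_I R$ associated to a finite set of generators of $I$.)

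Finally I would analyze idempotents of $\widehat{R}_I$. Writing $\widehat{R}_I = \varprojlim_n R/I^n$, any idempotent corresponds to a compatible system $(e_n \in R/I^n)_n$; the surjection $R/I^n \twoheadrightarrow R/I$ has nilpotent kernel, so idempotents lift uniquely across it, and thus idempotents of $\widehat{R}_I$ are in bijection with those of $R/I$. The connectedness of $\mathrm{Spec}(R/I)$ forces $R/I$ (and therefore $\widehat{R}_I$) to have only the trivial idempotents $0$ and $1$, so $\myR\Gamma_I(R)$ is indecomposable.

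The main obstacle is step two: cleanly identifying $\mathrm{End}_{D(R)}(\myR\Gamma_I R)$ with $\widehat{R}_I$ as a ring, in particular verifying that the multiplication induced by composition of endomorphisms really is the ring multiplication on $\widehat{R}_I$. Once that identification is in hand, the adjunction input is a standard property of local cohomology and the idempotent analysis is entirely formal.
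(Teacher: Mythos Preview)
Your argument is correct and takes a genuinely different route from the paper's. The paper argues as follows: derived $I$-adic completion sends $\myR\Gamma_I(R)$ to $\widehat{R}$ in $D(\widehat{R})$, and conversely $\myR\Gamma_I$ recovers any $K\in D_I(R)$ from its completion; hence a nontrivial splitting of $\myR\Gamma_I(R)$ would give a nontrivial splitting $\widehat{R}\simeq M\oplus N$ in $D(\widehat{R})$, which is ruled out by a rank argument once one notes that $\Spec(\widehat{R})$ is connected (since $\Spec(R/I)$ is). Your approach instead computes the endomorphism ring of $\myR\Gamma_I(R)$ directly: the adjunction $D_I(R)\hookrightarrow D(R)\dashv \myR\Gamma_I$ together with Greenlees--May duality identifies $\mathrm{End}_{D(R)}(\myR\Gamma_I R)$ with $\widehat{R}_I$, and connectedness of $\Spec(R/I)$ forces $\widehat{R}_I$ to have no nontrivial idempotents.

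Both proofs rest on the same torsion--completion duality, but yours is more informative (it pins down the full endomorphism ring, not just its idempotents) at the price of having to verify that the identification with $\widehat{R}_I$ is a \emph{ring} isomorphism --- exactly the point you flag. One clean way to dispatch that step: the equivalence $\myR\Gamma_I\colon D_{I\text{-comp}}(R)\xrightarrow{\sim} D_I(R)$ with inverse $L\Lambda_I$ carries $\widehat{R}$ to $\myR\Gamma_I(R)$, so $\mathrm{End}_{D(R)}(\myR\Gamma_I R)\cong \mathrm{End}_{D(R)}(\widehat{R})$ as rings; then since $\widehat{R}=L\Lambda_I(R)$ is derived $I$-complete, the adjunction gives $\myR\mathrm{Hom}_R(\widehat{R},\widehat{R})\simeq \myR\mathrm{Hom}_R(R,\widehat{R})=\widehat{R}$, and the resulting bijection $\phi\mapsto\phi(1)$ visibly takes composition to multiplication. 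The paper's approach avoids this verification entirely by working with the object $\widehat{R}\in D(\widehat{R})$ rather than with endomorphism rings.
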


In the proof below, we write $D_I(R)$ for the full subcategory of $D(R)$ spanned by complexes $K$ such that $K$ restricts to $0$ in $D(U)$, where $U = \mathrm{Spec}(A) - \mathrm{Spec}(R/I)$; this is equivalent to asking that each $H^i(K)$ is $I^\infty$-torsion.

\begin{proof}
Write $\widehat{R}$ for the $I$-adic completion of $R$, and let $K \mapsto \widehat{K}$ denote the derived $I$-adic completion functor $D(R) \to D(\widehat{R})$; as $R$ is Noetherian, we can calculate $\widehat{K}$ by applying the naive $I$-adic completion functor $M \mapsto \lim M/I^n M$ to terms of a flat representative $K^\mydot$ of $K$ in $D(R)$ (see \cite[Tag 091N]{stacks-project}). In particular, the derived $I$-adic completion of $R$ is $\widehat{R}$, so the notation is consistent. We need the following fact: if $K \in D_I(R)$, then $\myR\Gamma_I(\widehat{K}) \simeq K$; see \cite[Tag 06AV]{stacks-project} for a proof. In particular, the derived $I$-adic completion of any non-zero object in $D_I(R)$ is non-zero, so this functor takes decomposable objects of $D_I(R)$ to decomposable objects of $D(\widehat{R})$.

Now consider $K = \myR\Gamma_I(R) \in D_I(R)$. Using the Cech complex to calculate $\myR\Gamma_I(R)$, we see that $\widehat{K} \simeq \widehat{R}$. Now $\widehat{R} \in D(\widehat{R})$ is indecomposable by the hypothesis that $\mathrm{Spec}(R/I)$ is connected: if $\widehat{R} \simeq M \oplus N$ in $D(\widehat{R})$, then each of $M$ and $N$ are finite projective $\widehat{R}$-modules (viewed as complexes placed in degree $0$), and their ranks give locally constant $\mathbb{N}$-valued functions $r_M,r_N:\mathrm{Spec}(\widehat{R}) \to \mathbb{N}$ such that $r_M + r_N = 1$. As $\mathrm{Spec}(R/I)$ is connected, so is $\mathrm{Spec}(\widehat{R})$, so all such locally constant functions are constant, so either $r_M = 1$ and $r_N = 0$ or vice versa.  Thus, either $M \simeq \widehat{R}$ and $N = 0$, or vice versa. Now the fact quoted in the previous paragraph implies that $\myR\Gamma_I(R)$ is also indecomposable, proving the claim.
\end{proof}

As a special case of \autoref{proposition--local cohomology complex are indecomposable} (the case $I=\m$), we have the following result which we expect is well known to experts but we do not know a reference.

\begin{corollary}
\label{corollary--full dualizing complex never splits}
Let $(R,\m)$ be a Noetherian local ring of dimension $d$ with a dualizing complex $\omega_R^\mydot$. Then $\omega_R^\mydot\qis \oplus_{i=0}^d(\myH^{-i}\omega_R^\mydot)[i]$ if and only if $R$ is Cohen-Macaulay. Equivalently, $\myR\Gamma_\m R \qis\oplus_{i=0}^d(H_\m^i(R))[-i]$ if and only if $R$ is Cohen-Macaulay.
\end{corollary}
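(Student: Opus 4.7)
The plan is to deduce this immediately from \autoref{proposition--local cohomology complex are indecomposable} applied with $I=\m$, with local duality handling the passage between the $\omega_R^\mydot$ and $\myR\Gamma_\m R$ formulations.

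The ``if'' direction is essentially tautological: if $R$ is Cohen-Macaulay of dimension $d$, then $H_\m^i(R)=0$ for $i\neq d$ (and dually $\myH^{-i}\omega_R^\mydot=0$ for $i\neq d$), so $\myR\Gamma_\m R \qis H_\m^d(R)[-d]$ and $\omega_R^\mydot \qis \omega_R[d]$ trivially equal the asserted direct sums. For the equivalence of the two formulations, I would invoke local duality $\myR\Gamma_\m R \qis \myR\Hom_R(\omega_R^\mydot, E_R(k))$: applying Matlis duality $\myR\Hom_R(-, E_R(k))$ converts a splitting of $\omega_R^\mydot$ into shifts of its cohomology modules into a splitting of $\myR\Gamma_\m R$ into shifts of $H_\m^i(R)$, and vice versa. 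Hence it suffices to prove the $\myR\Gamma_\m R$ version.

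For the nontrivial ``only if'' direction, I would apply \autoref{proposition--local cohomology complex are indecomposable} with $I=\m$. Its connectedness hypothesis is automatic, since $\mathrm{Spec}(R/\m)$ is a single closed point, and the proposition yields that $\myR\Gamma_\m R$ is indecomposable in $D(R)$. Given a decomposition $\myR\Gamma_\m R \qis \oplus_{i=0}^d H_\m^i(R)[-i]$, indecomposability forces all but one of the summands to vanish; Grothendieck's non-vanishing $H_\m^d(R)\neq 0$ then pins the surviving summand down as $H_\m^d(R)[-d]$, so $H_\m^i(R)=0$ for $0\leq i<d$, which is precisely the Cohen-Macaulay condition on $R$.

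The main substance of the argument has been absorbed into \autoref{proposition--local cohomology complex are indecomposable}, so there is no real obstacle here; the only routine point is to verify that a splitting into shifted cohomology modules really is a genuine direct sum decomposition in $D(R)$ to which indecomposability may be applied, but this is immediate from the definition of ``indecomposable'' given in the proposition.
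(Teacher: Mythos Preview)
Your proposal is correct and matches the paper's approach: the corollary is stated there simply as the special case $I=\m$ of \autoref{proposition--local cohomology complex are indecomposable}, and your write-up supplies exactly the routine details (connectedness of $\Spec(R/\m)$, Grothendieck non-vanishing to identify the surviving summand, and local duality for the equivalence of the two formulations).
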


This shows that one at least must truncate at the bottom degree of the dualizing complex.  Of course, there are numerous rings $R$ whose truncated dualizing complexes split into direct sums of their cohomologies even if the non-Cohen-Macaulay locus is not isolated.  For instance, suppose that $R$ is an $F$-injective or Du Bois $d$-dimensional ring whose non-Cohen-Macaulay locus is isolated.  Let $S = R[x]$.  Then the map $f : \Spec S \to \Spec R$ is smooth and hence $\omega_S^{\mydot} = f^! \omega_R^{\mydot} = f^* \omega_R^{\mydot} = S \otimes_R \omega_R^{\mydot}$, see for instance \cite[Chapter VII, Section 4]{HartshorneResidues}.  It follows that the non-Cohen-Macaulay locus of $S$ is not isolated, but that $\tau_{>-d-1} \omega_{S}^{\mydot}$ is a direct sum of its cohomologies.  This suggests the following question.

\begin{question}
Do any related conditions on singularities (for instance, $F$-injective, $F$-pure, Du~Bois, log canonical, having only a unique $F$-pure or log canonical center) imply that the truncated dualizing complex
\[
\tau_{> -d} \omega_R^{\mydot}
\]
is a direct sum of its cohomologies?  In particular, does this behavior still occur even when the non-Cohen-Macaulay locus is not isolated?
\end{question}

Our characteristic zero result on \autoref{thm.MainTheoremOnDuBois} actually suggests the following question as well.

\begin{question}
Suppose that $(R, \m, k)$ is an $F$-finite Noetherian local ring of characteristic $p > 0$.  Is there a complex $B \in D^b(R)$ analogous to the characteristic zero object $\myR \Gamma_{\m}(\DuBois{X})$, the Matlis dual of $\myR \Hom_X(\DuBois{X}, \omega_X^{\mydot})$.  In particular, we would hope for the following properties (although some might be weakened).
\begin{enumerate}
\item We have a functorial map $\myR \Gamma_{\m}(R) \to B$ in the derived category.
\item The induced maps on cohomology $H^i_{\m}(R) \to \myH^{i}(B)$ are surjective with kernel equal to $0^{F}_{H^i_{\m}(R)}$, the Frobenius closure of zero.
\item If $D \to B \to (H^d_{\m}(R) / 0^*_{H^d_{\m}(R)})[-d] \xrightarrow{+1}$ is an exact triangle and $\myH^{i}(D)$ has finite length for all $i$, then $D$ is quasi-isomorphic to a complex of $k$-vector spaces.\label{FinalQuestionBit}
\end{enumerate}
The statement \autoref{FinalQuestionBit} is the analog of \autoref{thm.MainTheoremOnDuBois}. If the residue field $k$ is perfect and each $H^i_\m(R)$ has finite length for $i < \dim(R)$, then such a $B$ exists: we may set $B$ to be the ``homotopy fibre product''  $\myR\Gamma_\m(R_\infty) \times_{H^d_\m(R_\infty)[-d]} (H^d_\m(R)/0^F_{H^d_\m(R)})[-d]$ (defined formally as a suitable shifted cone). In general, however, we do not know if such a $B$ exists.
\end{question}

\bibliographystyle{skalpha}
\bibliography{CommonBib}

\providecommand{\bysame}{\leavevmode\hbox to3em{\hrulefill}\thinspace}
\providecommand{\MR}{\relax\ifhmode\unskip\space\fi MR}
\providecommand{\MRhref}[2]{%
  \href{http://www.ams.org/mathscinet-getitem?mr=#1}{#2}
}
\providecommand{\href}[2]{#2}
\begin{thebibliography}{BEV05}

\bibitem[Amb03]{AmbroQuasiLogVarieties}
{\sc F.~Ambro}: \emph{Quasi-log varieties}, Tr. Mat. Inst. Steklova
  \textbf{240} (2003), no.~Biratsion. Geom. Linein. Sist. Konechno Porozhdennye
  Algebry, 220--239. {\sf\scriptsize 1993751 (2004f:14027)}

\bibitem[BS16]{BhattScholzeWittGr}
{\sc B.~Bhatt and P.~Scholze}: \emph{Projectivity of the {W}itt vector affine
  grassmannian}, arXiv:1507.06490, to appear in Inventiones mathematicae.

\bibitem[BST15]{BlickleSchwedeTuckerF-singularitiesvialterations}
{\sc M.~Blickle, K.~Schwede, and K.~Tucker}: \emph{${F}$-singularities via
  alterations}, Amer. J. Math \textbf{137} (2015), no.~1, 61--109.

\bibitem[BEV05]{BravoEncinasVillmayorSimplified}
{\sc A.~M. Bravo, S.~Encinas, and O.~Villamayor}: \emph{A simplified proof of
  desingularization and applications}, Rev. Mat. Iberoamericana \textbf{21}
  (2005), no.~2, 349--458. {\sf\scriptsize MR2174912}

\bibitem[DB81]{DuBoisMain}
{\sc P.~Du~Bois}: \emph{Complexe de de {R}ham filtr\'e d'une vari\'et\'e
  singuli\`ere}, Bull. Soc. Math. France \textbf{109} (1981), no.~1, 41--81.
  {\sf\scriptsize MR613848 (82j:14006)}

\bibitem[EH08]{EnescuHochsterTheFrobeniusStructureOfLocalCohomology}
{\sc F.~Enescu and M.~Hochster}: \emph{The {F}robenius structure of local
  cohomology}, Algebra Number Theory \textbf{2} (2008), no.~7, 721--754.

\bibitem[Fed83]{FedderFPureRat}
{\sc R.~Fedder}: \emph{{$F$}-purity and rational singularity}, Trans. Amer.
  Math. Soc. \textbf{278} (1983), no.~2, 461--480.

\bibitem[Fuj14]{FujinoFundamentaltheoremforSLC}
{\sc O.~Fujino}: \emph{Fundamental theorems for semi log canonical pairs},
  Algebr. Geom. \textbf{1} (2014), no.~2, 194--228.

\bibitem[Gab04]{Gabber.tStruc}
{\sc O.~Gabber}: \emph{Notes on some {$t$}-structures}, Geometric aspects of
  Dwork theory. Vol. I, II, Walter de Gruyter GmbH \& Co. KG, Berlin, 2004,
  pp.~711--734.

\bibitem[Han99]{HanesThesis}
{\sc D.~Hanes}: \emph{Special conditions on maximal {C}ohen-{M}acaulay modules,
  and applications to the theory of multiplicities}, Thesis, University of
  Michigan (1999).

\bibitem[Har66]{HartshorneResidues}
{\sc R.~Hartshorne}: \emph{Residues and duality}, Lecture notes of a seminar on
  the work of A. Grothendieck, given at Harvard 1963/64. With an appendix by P.
  Deligne. Lecture Notes in Mathematics, No. 20, Springer-Verlag, Berlin, 1966.

\bibitem[HH90]{HochsterHunekeTC1}
{\sc M.~Hochster and C.~Huneke}: \emph{Tight closure, invariant theory, and the
  {B}rian\c con-{S}koda theorem}, J. Amer. Math. Soc. \textbf{3} (1990), no.~1,
  31--116.

\bibitem[Ish84]{IshidaIsolatedDuBoissingularities}
{\sc M.-N. Ishida}: \emph{The dualizing complexes of normal isolated {D}u
  {B}ois singularities}, Algebraic and Topological theories (1984), 387--390.

\bibitem[Kol86]{KollarHigherdirectimagesofdualizingsheaves1}
{\sc J.~Koll\'{a}r}: \emph{Higher direct images of dualizing sheaves {I}}, Ann.
  of Math. (2) \textbf{123} (1986), no.~1, 11--42.

\bibitem[KK10]{KollarKovacsLCImpliesDB}
{\sc J.~Koll{\'a}r and S.~J. Kov{\'a}cs}: \emph{Log canonical singularities are
  {D}u {B}ois}, J. Amer. Math. Soc. \textbf{23} (2010), no.~3, 791--813.
  {\sf\scriptsize 2629988}

\bibitem[KS11]{KovacsSchwedeDuBoisSurvey}
{\sc S.~J. Kov{\'a}cs and K.~E. Schwede}: \emph{Hodge theory meets the minimal
  model program: a survey of log canonical and {D}u {B}ois singularities},
  Topology of stratified spaces, Math. Sci. Res. Inst. Publ., vol.~58,
  Cambridge Univ. Press, Cambridge, 2011, pp.~51--94. {\sf\scriptsize 2796408
  (2012k:14003)}

\bibitem[Kun76]{KunzOnNoetherianRingsOfCharP}
{\sc E.~Kunz}: \emph{On {N}oetherian rings of characteristic {$p$}}, Amer. J.
  Math. \textbf{98} (1976), no.~4, 999--1013.

\bibitem[Ma15]{MaF-injectivityandBuchsbaumsingularities}
{\sc L.~Ma}: \emph{{${F}$}-injectivity and {B}uchsbaum singularities}, Math.
  Ann \textbf{362} (2015), no.~1-2, 25--42.

\bibitem[QS16]{QuyShimomotoFinjectivityandFrobeniusclosure}
{\sc P.~H. Quy and K.~Shimomoto}: \emph{{$F$}-injectivity and {F}robenius
  closure of ideals in {N}oetherian rings of characteristic $p>0$}, preprint.

\bibitem[Sch82]{SchenzelApplicationsOfDualizingComplexes}
{\sc P.~Schenzel}: \emph{Applications of dualizing complexes to {B}uchsbaum
  rings}, Adv. Math. \textbf{44} (1982), no.~1, 61--77.

\bibitem[SZ13]{SchwedeandZhangBertinitheoremsforFsingularities}
{\sc K.~Schewde and W.~Zhang}: \emph{Bertini theorems for {$F$}-singularities},
  Proc. Lond. Math. Soc. (3) \textbf{107} (2013), no.~4, 851--874.

\bibitem[Smi94]{Smithtightclosureofparameterideals}
{\sc K.~E. Smith}: \emph{Tight closure of parameter ideals}, Invent. Math.
  \textbf{115} (1994), no.~1, 41--60.

\bibitem[Smi97]{SmithFRatImpliesRat}
{\sc K.~E. Smith}: \emph{{$F$}-rational rings have rational singularities},
  Amer. J. Math. \textbf{119} (1997), no.~1, 159--180.

\bibitem[spa15]{stacks-project}
{\sc T.~stacks~project authors}: \emph{The {S}tacks {P}roject}, Available at
  \url{http://stacks.math.columbia.edu}.

\bibitem[SV86]{StuckradandVogelBuchsbaumringsandapplications}
{\sc J.~St{\"u}ckrad and W.~Vogel}: \emph{Buchsbaum rings and applications},
  Springer-Verlag, Berlin, 1986.

\end{thebibliography}
\end{document}